\journal{Journal of \LaTeX\ Templates}
\begin{document}

\begin{frontmatter}

\title{Weak group inverse }

\author[1-address,2-address]{Hongxing Wang\corref{mycorrespondingauthor}}
\cortext[mycorrespondingauthor]{Corresponding author}
\ead{winghongxing0902@163.com}
\address[1-address]{
College of Science,
Guangxi University for Nationalities,
Nanning, 530006, P.R. China }

\author[2-address]{Jianlong Chen}
 \ead{jlchen@seu.edu.cn)}
\address[2-address]{
School of Mathematics,
Southeast University, Nanjing, 210096, P.R. China}

\begin{abstract}
In this paper,
we introduce a weak group inverse
(called the WG inverse in the present paper)
for square matrices of an arbitrary index,
and
give some of its characterizations and properties.
Furthermore,
we introduce two orders:
one is a pre-order and the other is a partial order,
and derive several characterizations of the two orders.
At last,
one characterization  of the core-EP order
is derived by using   the WG inverses.
\end{abstract}

\begin{keyword}
group inverse;
weak group inverse;
WG order;
core-EP order;
C-E partial order;
core-EP decomposition

\MSC[2010]  15A09\sep 15A57\sep 15A24
\end{keyword}

\end{frontmatter}

\linenumbers

\section{Introduction}
\numberwithin{equation}{section}  
\newtheorem{theorem}{T{\scriptsize HEOREM}}[section]
\newtheorem{lemma}[theorem]{L{\scriptsize  EMMA}}
\newtheorem{corollary}[theorem]{C{\scriptsize OROLLARY}}
\newtheorem{proposition}[theorem]{P{\scriptsize ROPOSITION}}
\newtheorem{remark}{R{\scriptsize  EMARK}}[section]
\newtheorem{definition}{D{\scriptsize  EFINITION}}[section]
\newtheorem{algorithm}{A{\scriptsize  LGORITHM}}[section]
\newtheorem{example}{E{\scriptsize  XAMPLE}}[section]
\newtheorem{problem}{P{\scriptsize  ROBLEM}}[section]
\newtheorem{assumption}{A{\scriptsize  SSUMPTION}}[section]

In this paper, we use the following notations.
The symbol $ {\mathbb{C}}_{m, n}$ is
the set of $m\times n$ matrices with complex entries;
 $A^\ast  $, ${\mathcal{R}}(A)$ and ${\rm rk}\left( A \right)$
 represent
the \emph{conjugate transpose},
\emph{range space}  (or \emph{column space}) and \emph{rank}
of $A \in {\mathbb{C} }_{m, n} $.
Let $A\in\mathbb{C}_{n, n}$,
  the smallest positive integer $k$,
   which satisfies ${\rm rk}\left( A^{k+1} \right)={\rm rk}\left( A^k \right)$,
 is called the \emph{index} of $A$ and is  denoted as  ${\rm Ind}(A)$.
%
%
The symbol  $\mathbb{C}^{\mbox{\rm\footnotesize\texttt{CM}}}_{n }$
stands for the set of $n\times n$  matrices
of index  equal to one.
The {\emph{Moore-Penrose inverse}} of $A\in\mathbb{C}_{m, n}$ is defined as
    the unique matrix  $X\in\mathbb{C}_{n, m}$
    satisfying  the equations:
    \begin{align*}
 (1)~AXA = A, \ \  (2)~XAX = X, \ \
 (3)~\left( {AX} \right)^\ast = AX, \ \
  (4)~\left( {XA} \right)^\ast = XA,
 \end{align*}
and is denoted as $X = A^{\dag}$;
if $X$ satisfies the equation $AXA=A$,
then $X$ is called a \emph{g-inverse} of $A$,
and is denoted as $A^-$;
$E_A$  stands for the one
orthogonal projection $E_A=I-AA^\dag$.
The \emph{Drazin inverse}  of $A \in {\mathbb{C} }_{n,n}$ is defined as
    the unique matrix $X\in\mathbb{C}_{n, n}$ satisfying the equations
\begin{align*}
(1^k)~XA^{k+1} = A^k, \ \ (2)~ XAX = X, \ \ (5)~ AX = XA,
\end{align*}
and is usually denoted as $X = A^D$.
In  particular,
 when  $A\in \mathbb{C}^{\mbox{\rm\footnotesize\texttt{CM}}}_{n }$,
 the matrix  $X$   is called the \emph{group inverse} of $A$,
 and is denoted as $X = A^\#$ ({see \rm\cite{Ben2003book}}).
The \emph{core inverse} of $A \in {\mathbb{C} }_{n}^{\mbox{\rm\footnotesize\texttt{CM}}}$
is defined as the unique matrix $X\in\mathbb{C}_{n,n}$ satisfying
\begin{align}
\nonumber
AX = AA^\dag,~~  {\mathcal{R}}\left( {X} \right) \subseteq  {\mathcal{R}}\left( A\right)
\end{align}
and is denoted as $X = A^{\tiny\textcircled{\#}}$ {\rm\cite{Baksalary2010lma}}.
When $A\in\mathbb{C}^{\mbox{\rm\footnotesize\texttt{CM}}}_{n }$,
we call it a  core invertible (or group invertible) matrix.

Recently,
the research of the core inverse and related problems is drawing ever-growing attention.
Several generalized core inverses are introduced,
which are the {DMP inverse},
the {B-T inverse}
and
the {core-EP inverse} \cite{Baksalary2014amc,Malik2014amc,Manjunatha2014lma}.
Let $A \in {\mathbb{C} _{n,n} }$ with ${\rm Ind}\left( A \right) = k$.
The \emph{DMP inverse} of $A  $ is
$A^{d,\dag}=A^DAA^\dag$
{\cite{Malik2014amc}}.
The \emph{B-T inverse} of $A$ is 
$A^\diamondsuit=\left(A^2A^\dag\right)^\dag$
\cite[Definition 1]{Baksalary2014amc}.
The \emph{core-EP inverse} of $A $ is
$A^{\tiny\textcircled{\dag}}
= A^k\left( {\left( {A^\ast  } \right)^kA^{k + 1}} \right)^ -A^k$
 \cite[Theorem 3.5 and Remark 2]{Manjunatha2014lma}.
Especially,
when $A\in { \mathbb{C}}_n^{\mbox{\rm\footnotesize\texttt{CM}}} $,
$A^{\tiny\textcircled{\#}}
=A^\diamondsuit
=A^{d,\dag}
=A^{\tiny\textcircled{\dag}}$
\cite{Baksalary2014amc,Malik2014amc,Manjunatha2014lma}.
The relevant orders are presented,
for example,
 the core-EP order, the DMP order and the B-T order \cite{Baksalary2014amc,Deng2015amc,Wang2016laa-A}.
 The three orders   are all pre-orders,  although the core order is a partial order.

 In \cite{Wang2016laa-A},
 Wang introduced the core-EP decomposition.
Applying the decomposition,
 Wang introduced the core-minus partial order,
 in a way similar to
 applying the core-nilpotent  decomposition to define  the C-N partial order.

Furthermore,
it is   known that
the index of group invertible matrix
is also  equal to one,
that is,
one matrix is core invertible if and only if it is group invertible.
Although  the generalizations of the core inverse attract much attention,
the generalizations of the group inverse get little.
Therefore,
it is of interest to inquire
whether
 the group inverse can be generalized
by some decompositions.

In this paper,
our main tools are two decompositions:
one   is the core decomposition,
the other is the core-EP decomposition.
The aim of the paper is to
introduce a generalized group inverse,
consider its applications
and
derive some of its characterizations and properties.

\section{Preliminaries}
\label{Preliminaries}

In this section,
we present some preliminary results.

 \begin{lemma}
{\rm\cite{Ben2003book}} 
 Let
$A\in \mathbb{C}_{n,n}$
be with
${\rm Ind}(A)=k$.
Then
\begin{align}
\label{2-7}
A^D
=
A^{k }\left(A^{ k+1 }\right)^{\#}.
\end{align}
\end{lemma}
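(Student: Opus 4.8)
The plan is to verify that $X = A^{k}\left(A^{k+1}\right)^{\#}$ satisfies the three defining equations of the Drazin inverse, namely $(1^{k})$ $XA^{k+1}=A^{k}$, $(2)$ $XAX=X$, and $(5)$ $AX=XA$, and then invoke uniqueness of the Drazin inverse. First I would record the basic facts about $B := A^{k+1}$: since ${\rm Ind}(A)=k$, we have ${\rm rk}(A^{k+1})={\rm rk}(A^{k+2})$, and more generally ${\rm rk}(A^{k+j})={\rm rk}(A^{k})$ for all $j\ge 0$; hence ${\rm rk}(B^{2})={\rm rk}(A^{2k+2})={\rm rk}(A^{k})={\rm rk}(A^{k+1})={\rm rk}(B)$, so $B$ has index one and the group inverse $B^{\#}=\left(A^{k+1}\right)^{\#}$ genuinely exists. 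I would also note the commutation facts $A B = B A$ and, crucially, $A B^{\#} = B^{\#} A$, the latter because $B^{\#}$ is a polynomial in $B$ (equivalently, $B^{\#}=B\,(B^{4})^{-}\,B$ type expressions commute with anything commuting with $B$), and $A$ commutes with $B=A^{k+1}$.

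For equation $(1^{k})$: compute $XA^{k+1} = A^{k}\left(A^{k+1}\right)^{\#}A^{k+1} = A^{k}\,B^{\#}B$. Now $B^{\#}B = BB^{\#}$ is the projector onto ${\mathcal R}(B)={\mathcal R}(A^{k+1})$, and since ${\mathcal R}(A^{k})={\mathcal R}(A^{k+1})$ (again by the rank stabilization at the index, as ${\mathcal R}(A^{k})\supseteq{\mathcal R}(A^{k+1})$ with equal dimensions), multiplying this projector on the left factor $A^{k}$ — whose columns lie in ${\mathcal R}(A^{k})={\mathcal R}(B)$ — acts as the identity: $A^{k}B^{\#}B = A^{k}$. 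Alternatively, and more cleanly, $A^{k}B^{\#}B = B^{\#}B A^{k} = B^{\#}A^{k+1}A^{k-1}\cdot\text{(adjust exponents)}$; the slickest route is $A^{k}B^{\#}B = A^{k}\left(A^{k+1}\right)^{\#}A^{k+1}$ and use $\left(A^{k+1}\right)^{\#}A^{k+1}\left(A^{k+1}\right)^{\#} = \left(A^{k+1}\right)^{\#}$ together with $A^{k} = A^{k+1}\left(A^{k+1}\right)^{\#}A^{k}\cdot$(exponent bookkeeping) — I would settle on the range argument above as the transparent one.

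For $(5)$: $AX = A\,A^{k}B^{\#} = A^{k}\,A B^{\#} = A^{k}B^{\#}A = XA$, using $AB^{\#}=B^{\#}A$. For $(2)$: $XAX = A^{k}B^{\#}\,A\,A^{k}B^{\#} = A^{k}B^{\#}A^{k+1}B^{\#} = A^{k}B^{\#}B B^{\#} = A^{k}B^{\#} = X$, where I used $A^{k+1}=B$ and $B^{\#}BB^{\#}=B^{\#}$. By the uniqueness clause in the definition of the Drazin inverse quoted in the introduction, $X=A^{D}$, which is \eqref{2-7}. The main obstacle — really the only point requiring care — is the exponent/range bookkeeping in equation $(1^{k})$: one must genuinely use the defining property of the index (the stabilization ${\rm rk}(A^{k+1})={\rm rk}(A^{k})$, equivalently ${\mathcal R}(A^{k})={\mathcal R}(A^{k+1})$ and ${\mathcal N}(A^{k})={\mathcal N}(A^{k+1})$) rather than formal manipulation, since $A^{k}\left(A^{k+1}\right)^{\#}A^{k+1}=A^{k}$ is false without it; everything else is routine commutation of $A$ with the polynomial $\left(A^{k+1}\right)^{\#}$ in $A^{k+1}$.
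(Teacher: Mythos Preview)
The paper does not prove this lemma; it is quoted from \cite{Ben2003book} without argument, so there is no ``paper's proof'' to compare against. Your direct verification that $X=A^{k}(A^{k+1})^{\#}$ satisfies the three Drazin axioms, together with uniqueness, is correct in outline and is the standard route.

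One small wobble worth tightening: in your check of $(1^{k})$ you argue that $A^{k}B^{\#}B=A^{k}$ because the columns of $A^{k}$ lie in $\mathcal{R}(B)$, but that range argument literally yields $B^{\#}B\,A^{k}=A^{k}$, not $A^{k}\,B^{\#}B=A^{k}$ (the projector is on the wrong side). The repair is either the commutation $A^{k}B^{\#}B=B^{\#}BA^{k}$ that you already note, or---cleaner, and avoiding the meandering ``adjust exponents'' detour---the null-space half of the index hypothesis: $\mathcal{N}(B^{\#}B)=\mathcal{N}(B)=\mathcal{N}(A^{k+1})=\mathcal{N}(A^{k})$, whence $A^{k}(I-B^{\#}B)=0$. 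With that single adjustment the argument is complete; equations $(2)$ and $(5)$ are handled correctly via $AB^{\#}=B^{\#}A$ (which indeed follows from $B^{\#}$ being a polynomial in $B=A^{k+1}$).
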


 \begin{lemma}
{\rm\cite{Baksalary2010lma,Hartwig1984lma,Wang2016laa-A}} 
 Let
$A\in \mathbb{C}_{n,n}$
be  with
${\rm Ind}(A)=k$.
Then
there exists  a   unitary matrix $U$
 such that
\begin{align}
\label{2-1}
A = U\left[ {{\begin{matrix}
 \Sigma K   & \Sigma L   \\
 0   & 0   \\
\end{matrix} }} \right]U^\ast,
\end{align}
where
$\Sigma=\mbox{diag}\left( {\sigma_1 I_{r_1} ,\sigma _2 I_{r_2} ,\ldots , \sigma _t I_{r_t} } \right)$
is the diagonal matrix of singular values of $A$,
$\sigma_1 > \sigma _2 > \cdots > \sigma _t >0$,
$r_1 +r _2 + \cdots + r _t =r$,
and $K \in {\mathbb{C} }_{r,r} $,
$L \in {\mathbb{C} }_{r, n - r} $
satisfy
$KK^\ast  + LL^\ast  =I_r $.

Furthermore,
$A $ is core invertible
 if and only if
 ${\Sigma K}$  is   non-singular.
When  $A\in\mathbb{C}^{\mbox{\rm\footnotesize\texttt{CM}}}_{n }$,
 (\ref{2-1}) is called the  {core decomposition} of $A$
 and
\begin{align}
\label{2-2}
A^{\tiny\textcircled{\#}}
&
=
 U\left[ {{\begin{matrix}
 {T^{ - 1}}   & 0   \\
 0   & 0   \\
\end{matrix} }} \right]U^\ast ,
\\
\label{2-6}
A^{ {\#}}
&
=
U\left[ {{\begin{matrix}
 {T^{ - 1}}   & T^{ - 2}S   \\
 0   & 0   \\
\end{matrix} }} \right]U^\ast,
\end{align}
where
$T={\Sigma K}$
and
$S={\Sigma L}$.
\end{lemma}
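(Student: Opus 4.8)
The plan is to derive the decomposition (\ref{2-1}) from a singular value decomposition of $A$ and then read off every remaining assertion by a block computation. First I would fix a unitary matrix $U$ whose first $r$ columns span $\mathcal{R}(A)$, where $r = {\rm rk}(A)$; then $U^\ast AU = \left[ {\begin{matrix} B & C \\ 0 & 0 \end{matrix}} \right]$, the lower block rows vanishing precisely because $\mathcal{R}(A)$ is spanned by the first $r$ columns of $U$, and the $r\times n$ block $\left[ {\begin{matrix} B & C \end{matrix}} \right]$ has full row rank $r$. Writing an SVD $\left[ {\begin{matrix} B & C \end{matrix}} \right] = P_0\,\Sigma\,\left[ {\begin{matrix} K_0 & L_0 \end{matrix}} \right]$ with $P_0$ unitary, $\Sigma = \mbox{diag}(\sigma_1 I_{r_1},\ldots,\sigma_t I_{r_t})$ the diagonal matrix of nonzero singular values (reordered into the stated blocks), and $\left[ {\begin{matrix} K_0 & L_0 \end{matrix}} \right]$ having orthonormal rows, one checks that replacing $U$ by $U\,\mbox{diag}(P_0, I_{n-r})$ brings $A$ to the form $U\left[ {\begin{matrix} \Sigma K & \Sigma L \\ 0 & 0 \end{matrix}} \right]U^\ast$ with $K = K_0P_0$ and $L = L_0$; since right multiplication by the unitary $P_0$ does not change $KK^\ast$, we still have $KK^\ast + LL^\ast = I_r$. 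Finally $AA^\ast = U\left[ {\begin{matrix} \Sigma(KK^\ast + LL^\ast)\Sigma & 0 \\ 0 & 0 \end{matrix}} \right]U^\ast = U\left[ {\begin{matrix} \Sigma^2 & 0 \\ 0 & 0 \end{matrix}} \right]U^\ast$ confirms that the $\sigma_i$ are indeed the singular values of $A$. The one step that needs care is this passage from a two-sided SVD to a decomposition with the \emph{same} unitary on both sides; the device is exactly the change of basis that aligns the first $r$ columns of $U$ with $\mathcal{R}(A)$, after which any further adjustment of $U$ by a block-diagonal unitary only alters $K,L$ by a unitary factor and so preserves $KK^\ast + LL^\ast = I_r$.

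For the core-invertibility criterion I would square (\ref{2-1}): $A^2 = U\left[ {\begin{matrix} (\Sigma K)^2 & (\Sigma K)(\Sigma L) \\ 0 & 0 \end{matrix}} \right]U^\ast$, so ${\rm rk}(A^2) = {\rm rk}\!\left( \Sigma K\,\Sigma\left[ {\begin{matrix} K & L \end{matrix}} \right] \right) = {\rm rk}(\Sigma K)$, because $\Sigma$ is invertible and $\left[ {\begin{matrix} K & L \end{matrix}} \right]$ has full row rank $r$. Since ${\rm rk}(A) = r$ and, by the definition of the index, $A$ is core invertible iff ${\rm Ind}(A) = 1$ iff ${\rm rk}(A^2) = {\rm rk}(A)$, we conclude that $A$ is core invertible if and only if $\Sigma K$ is nonsingular; in that case (\ref{2-1}) is the core decomposition of $A$.

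Finally, suppose $A \in \mathbb{C}^{\mbox{\rm\footnotesize\texttt{CM}}}_n$, so $T = \Sigma K$ is invertible, and put $S = \Sigma L$. For (\ref{2-6}) I would verify that $X = U\left[ {\begin{matrix} T^{-1} & T^{-2}S \\ 0 & 0 \end{matrix}} \right]U^\ast$ satisfies the defining equations of the group inverse: a direct block multiplication gives $AX = U\left[ {\begin{matrix} I_r & T^{-1}S \\ 0 & 0 \end{matrix}} \right]U^\ast = XA$, whence $AXA = A$ and $XAX = X$, and by uniqueness $X = A^{\#}$. For (\ref{2-2}) I would first compute $A^{\dag}$: since $\left[ {\begin{matrix} T & S \\ 0 & 0 \end{matrix}} \right] = \left[ {\begin{matrix} I_r \\ 0 \end{matrix}} \right]\left[ {\begin{matrix} T & S \end{matrix}} \right]$ is a full-rank factorization and $\left[ {\begin{matrix} T & S \end{matrix}} \right]\left[ {\begin{matrix} T & S \end{matrix}} \right]^\ast = TT^\ast + SS^\ast = \Sigma^2$, one gets $A^{\dag} = U\left[ {\begin{matrix} T^\ast\Sigma^{-2} & 0 \\ S^\ast\Sigma^{-2} & 0 \end{matrix}} \right]U^\ast$ and hence $AA^{\dag} = U\left[ {\begin{matrix} I_r & 0 \\ 0 & 0 \end{matrix}} \right]U^\ast$. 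Then $X = U\left[ {\begin{matrix} T^{-1} & 0 \\ 0 & 0 \end{matrix}} \right]U^\ast$ clearly has $\mathcal{R}(X) \subseteq \mathcal{R}(A)$ and satisfies $AX = AA^{\dag}$, so $X = A^{\tiny\textcircled{\#}}$ by uniqueness of the core inverse. These last verifications are routine once (\ref{2-1}) is established; no obstacle beyond the first paragraph is expected.
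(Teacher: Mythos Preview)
The paper does not prove this lemma: it is a cited preliminary result from \cite{Baksalary2010lma,Hartwig1984lma,Wang2016laa-A}, so there is no in-paper argument to compare against. Your proof is correct and self-contained. The derivation of (\ref{2-1}) via an initial unitary aligning $\mathcal{R}(A)$ with the first $r$ coordinates, followed by a block-diagonal unitary adjustment coming from an SVD of the top block, is equivalent to the standard Hartwig--Spindelb\"{o}ck construction (take $U$ to be the matrix of left singular vectors in an SVD $A=P\left[\begin{smallmatrix}\Sigma&0\\0&0\end{smallmatrix}\right]Q^\ast$ and read $\left[\begin{smallmatrix}K&L\end{smallmatrix}\right]$ off the first $r$ rows of $Q^\ast P$); your formulation has the minor advantage of making it transparent why $KK^\ast+LL^\ast=I_r$ survives the change of basis. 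The rank computation ${\rm rk}(A^2)={\rm rk}(\Sigma K)$ and the verifications of (\ref{2-2}) and (\ref{2-6}) by checking the defining equations are routine and accurate.
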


It is well known that the core-nilpotent decomposition
has been widely used in matrix theory
\cite{Ben2003book,Liu2016book,Mitra2010book}:
\begin{lemma}
\label{C-N-Decomposition}
{\rm{\cite[Core-nilpotent decomposition]{Mitra2010book}}}
  Let $A\in \mathbb{C}_{n,n}$ be with ${\rm Ind}(A)=k$,
then $A$ can be written as
the sum of matrices $\widehat{A}_1$ and $ {\widehat{A}}_2$,
 i.e.
$A = \widehat{A}_1 + \widehat{A}_2$, where
$$\widehat{A}_1\in { \mathbb{C}}_n^{\mbox{\rm\footnotesize\texttt{CM}}},  \
\widehat{A}^k_2 = 0
 \mbox{\ \rm and\ \ }
 \widehat{A}_1  \widehat{A}_2 =\widehat{ A}_2\widehat{A}_1 = 0.$$
\end{lemma}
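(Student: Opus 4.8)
The plan is to produce the two summands explicitly from the SVD-type form (\ref{2-1}) of $A$ and then verify the three required conditions by direct block computation. First I would fix a unitary $U$ and write $A = U\left[\begin{smallmatrix} \Sigma K & \Sigma L \\ 0 & 0\end{smallmatrix}\right]U^\ast$ as in (\ref{2-1}), set $T = \Sigma K$, $S = \Sigma L$, and recall that $A^k$ has range spanned by the first block of columns (so $T^k$ governs the invertible part). The natural candidate is to take $\widehat{A}_1$ to be the ``core part'' supported on $\mathcal{R}(A^k)$ and $\widehat{A}_2$ the nilpotent remainder. Concretely, since $A^D = A^k(A^{k+1})^\#$ by the first lemma, I would define $\widehat{A}_1 = A A^D A = A^2 A^D$ and $\widehat{A}_2 = A - A^2 A^D = (I - AA^D)A$; equivalently $\widehat A_1 = A^{k+1}(A^{k+1})^\#$ composed appropriately, but the cleanest is $\widehat A_1 = A A^D A$.

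Next I would check the three claims for this choice. That $\widehat{A}_1\widehat{A}_2 = \widehat{A}_2\widehat{A}_1 = 0$ follows from $AD$-commutation: $A^DA = AA^D$ is idempotent and commutes with $A$, so $\widehat A_2 = (I-AA^D)A$ and $\widehat A_1 = AA^D\cdot A$ multiply to zero in either order because $(AA^D)(I-AA^D)=0$ and everything commutes with $A$. That $\widehat A_2^k = 0$ follows because $\widehat A_2^k = (I-AA^D)^k A^k = (I-AA^D)A^k$, and $(I-AA^D)A^k = A^k - A^{D}A^{k+1}\cdots$; using $A^DA^{k+1}=A^k$ (property $(1^k)$ of the Drazin inverse) one gets $(I-AA^D)A^k = 0$. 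Finally, to see $\widehat A_1\in\mathbb{C}^{\mbox{\rm\footnotesize\texttt{CM}}}_n$, i.e. ${\rm Ind}(\widehat A_1)\le 1$, I would note $\widehat A_1 = AA^DA$ has $\widehat A_1^2 = AA^DA\cdot AA^DA = A(A^D A)^2 A\cdot\ldots$ simplifying via $AA^D$ idempotent to $\widehat A_1^2 = A^2 A^D A^2 A^D\cdot A^{-?}$; more transparently, one checks that $(A^D)$ restricted to $\mathcal R(A^k)$ is a genuine group inverse of $\widehat A_1$, so ${\rm rk}(\widehat A_1^2) = {\rm rk}(\widehat A_1)$.

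The cleanest route for the whole lemma, and the one I would actually write up, is the block-matrix route: in the $U$-coordinates, $A^k = U\left[\begin{smallmatrix} T^k & (\ast) \\ 0 & 0\end{smallmatrix}\right]U^\ast$ with $T^k$ invertible on the relevant subspace, which lets one split $\mathbb C^n$ as $\mathcal R(A^k)\oplus\mathcal N(A^k)$ (the core-nilpotent subspaces) and write $A$ in a $2\times 2$ block-triangular form $A = U\left[\begin{smallmatrix} A_1 & 0 \\ 0 & A_2\end{smallmatrix}\right]U^\ast$ after the further unitary/similarity adjustment, with $A_1$ invertible and $A_2$ nilpotent of index $k$; then $\widehat A_1 = U\left[\begin{smallmatrix} A_1 & 0 \\ 0 & 0\end{smallmatrix}\right]U^\ast$ and $\widehat A_2 = U\left[\begin{smallmatrix} 0 & 0 \\ 0 & A_2\end{smallmatrix}\right]U^\ast$ make all three conditions immediate. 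The main obstacle is purely bookkeeping: justifying that the splitting $\mathbb C^n = \mathcal R(A^k)\oplus\mathcal N(A^k)$ reduces $A$ to block-diagonal form (this is the classical Fitting/core-nilpotent decomposition and is where the hypothesis ${\rm Ind}(A)=k$ is essential — one needs $\mathcal R(A^k)\cap\mathcal N(A^k)=\{0\}$ and $\mathcal R(A^k)=\mathcal R(A^{k+1})$). Since the lemma is cited from \cite{Mitra2010book}, I would keep this brief and lean on the Drazin-inverse identities from the first lemma of the Preliminaries to make the verification mechanical.
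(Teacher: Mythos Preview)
The paper does not prove this lemma at all: it is quoted verbatim from \cite{Mitra2010book} as a preliminary fact, with no argument supplied. So there is no ``paper's own proof'' to compare your plan against.

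Your Drazin-inverse approach is correct and is the standard one: with $\widehat A_1 = AA^DA$ and $\widehat A_2 = (I-AA^D)A$, the commutation $AA^D=A^DA$ and idempotence of $AA^D$ give $\widehat A_1\widehat A_2=\widehat A_2\widehat A_1=0$; property $(1^k)$ gives $\widehat A_2^{\,k}=(I-AA^D)A^k=0$; and $A^D$ is a genuine group inverse of $\widehat A_1$ (since $\widehat A_1 A^D = A^D\widehat A_1 = AA^D$, $A^D\widehat A_1 A^D=A^D$, and $\widehat A_1 A^D\widehat A_1=\widehat A_1$), so $\widehat A_1\in\mathbb{C}_n^{\mbox{\rm\footnotesize\texttt{CM}}}$. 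Your trailing computation of $\widehat A_1^2$ is garbled, but the group-inverse observation you make right after it is exactly what is needed, so the argument closes.

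One caution about your ``block-matrix route'': the splitting $\mathbb C^n=\mathcal R(A^k)\oplus\mathcal N(A^k)$ does reduce $A$ to block-diagonal form, but only via a \emph{similarity}, not a unitary similarity, because these two subspaces are complementary yet generally not orthogonal. If you insist on a unitary $U$ you get a block upper-triangular form (with a nonzero off-diagonal block), which is precisely the core-EP decomposition of Lemma~\ref{core-EP-Decomposition}, not the core-nilpotent one. So in your write-up either drop the word ``unitary'' there and use a plain invertible $P$, or stick with the Drazin-inverse construction, which avoids this issue entirely.
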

Similarly,
Wang introduced the notion of
the core-EP decomposition in \cite{Wang2016laa-A}:
\begin{lemma}
{\rm\cite[Core-EP Decomposition]{Wang2016laa-A} }
\label{core-EP-Decomposition}
Let $A\in \mathbb{C}_{n,n}$ be with ${\rm Ind}(A)=k$,
then $A$ can be written as the sum of matrices $A_1$ and $A_2$,
 i.e.
$A = A_1 + A_2$,
 where

{\rm ({i})}\ \   $A_1\in { \mathbb{C}}_n^{\mbox{\rm\footnotesize\texttt{CM}}}$;

{\rm ({ii})}\ \     $A^k_2 = 0 $;

{\rm ({iii})}\ \   $A_1^\ast A_2 = A_2A_1 = 0$.

\noindent Here one or both of $A_1$ and $A_2$ can be null.
\end{lemma}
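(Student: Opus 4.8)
The plan is to extract $A_1$ and $A_2$ directly from the unitary decomposition (\ref{2-1}) of Lemma~2.2. Write $A = U\left[{\begin{matrix} T & S \\ 0 & 0 \end{matrix}}\right]U^\ast$ with $T = \Sigma K \in \mathbb{C}_{r,r}$ and $S = \Sigma L$. The matrix $T$ need not be invertible, so $A$ itself need not be core invertible; the key first step is to reduce $T$ further. Since $T$ is a square matrix (of strictly smaller size whenever $\mathrm{Ind}(A) \ge 2$), I would invoke the core-nilpotent decomposition (Lemma~\ref{C-N-Decomposition}) or, more cleanly, a Schur-type unitary triangularization of $T$ into a nonsingular part and a nilpotent part. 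Concretely, there is a unitary $V$ with $V^\ast T V = \left[{\begin{matrix} T_{11} & T_{12} \\ 0 & T_{22} \end{matrix}}\right]$ where $T_{11}$ is nonsingular and $T_{22}$ is nilpotent (with $T_{22}^{k-1} = 0$, matching the index bookkeeping). Conjugating the whole $2\times 2$ block form of $A$ by $\mathrm{diag}(V, I_{n-r})$ (absorbed into $U$) puts $A$ into a $3\times 3$ upper-triangular block form whose $(1,1)$ block is nonsingular and whose remaining diagonal blocks vanish.

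Next I would define $A_1$ to be the matrix obtained by keeping only the first block row (the row through the nonsingular $(1,1)$ block) and zeroing the rest, and $A_2 := A - A_1$. Then $A = A_1 + A_2$ by construction. Verifying (i): $A_1$ has the block shape $W\left[{\begin{matrix} B & C \\ 0 & 0 \end{matrix}}\right]W^\ast$ with $B$ nonsingular, and by the characterization in Lemma~2.2 (namely, $A_1$ is core invertible iff its leading block is nonsingular) we get $A_1 \in \mathbb{C}^{\mathrm{CM}}_n$. Verifying (ii): $A_2$ is strictly block upper triangular with nilpotent diagonal blocks, hence $A_2^k = 0$; one should track the exponents to confirm $k$ (not a larger power) suffices, which follows because the nilpotent block $T_{22}$ has index at most $k-1$ and the extra zero rows contribute one more. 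Verifying (iii): the products $A_2 A_1$ and $A_1^\ast A_2$ collapse to block multiplications that are zero because $A_1$ is supported on the first block row while $A_2$ is supported strictly above the diagonal in the complementary rows — I would just write out the two relevant block products and observe the overlap is empty. The degenerate cases ($A_1 = 0$ when $A$ is nilpotent, $A_2 = 0$ when $A$ has index $\le 1$) fall out automatically.

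The main obstacle I anticipate is bookkeeping rather than conceptual: getting the block sizes and the conjugating unitaries to line up so that all three conditions hold simultaneously, and in particular pinning down that $A_2^k = 0$ with exactly the exponent $k = \mathrm{Ind}(A)$ and that the asymmetric condition $A_1^\ast A_2 = 0$ (conjugate transpose on the first factor, unlike the symmetric condition in the core-nilpotent case) is genuinely satisfied — this is where the orthogonality built into the unitary decomposition (\ref{2-1}) is essential, since it forces $A_1^\ast$ to have its range inside the first coordinate block. An alternative, perhaps cleaner, route is to define $A_1$ intrinsically via the core-EP inverse, $A_1 = A A^{\tiny\textcircled{\dag}} A$ or equivalently $A_1 = A^{k}(A^{\tiny\textcircled{\dag}})^{k-1}$-type formula, and then \emph{derive} the block form; but since the excerpt only recalls the formula for $A^{\tiny\textcircled{\dag}}$ and not its properties, I would keep the self-contained constructive argument above and leave the inverse-theoretic reformulation as a remark.
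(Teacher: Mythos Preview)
The paper does not give a proof of this lemma: it is quoted from \cite{Wang2016laa-A} as a preliminary result, and the accompanying Lemma~\ref{Theorem-2-2} simply records the resulting block form~(\ref{2-4}). Your constructive argument is correct and in fact reproduces exactly that block form, so it matches the approach of the original source.

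A couple of remarks on the bookkeeping you flagged. First, your claim that the nilpotent block $T_{22}$ has index at most $k-1$ is correct, and here is the clean reason: in the Hartwig--Spindelb\"ock form~(\ref{2-1}) the row block $[T\ \,S]=\Sigma[K\ \,L]$ has full row rank $r$, so for every $j\ge 1$ one has ${\rm rk}(A^{j})={\rm rk}\bigl(T^{\,j-1}[T\ \,S]\bigr)={\rm rk}(T^{\,j-1})$. Hence ${\rm Ind}(T)={\rm Ind}(A)-1=k-1$, and therefore $T_{22}^{\,k-1}=0$; this immediately gives $A_2^{k}=0$ since the only surviving blocks of $A_2^{k}$ are $T_{22}^{k}$ and $T_{22}^{k-1}S_2$. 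Second, the verification of $A_1^\ast A_2=0$ is a one-line block computation once you observe that $A_1$ lives in the first block row and $A_2$ has zero first block column in the $\widehat U$-coordinates; the unitarity of the conjugating matrix is exactly what makes the conjugate transpose stay in the same block coordinates. Your instinct that the intrinsic definition $A_1=AA^{\tiny\textcircled{\dag}}A$ would be circular here is right: in this paper the core-EP inverse is recalled via~(\ref{2-5}), which already presupposes the decomposition.
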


\begin{lemma}
{\rm\cite{Wang2016laa-A}}
\label{Theorem-2-2}
 Let
 the core-EP decomposition of $A\in \mathbb{C}_{n,n}$
 be as in Lemma \ref{core-EP-Decomposition}.
 Then there exists a  unitary matrix $U$
 such that
\begin{align}
\label{2-4}
A_1 = U\left[ {{\begin{matrix}
 T   & S   \\
 0   & 0   \\
\end{matrix} }} \right]U^\ast,
\
\
A_2 = U\left[ {{\begin{matrix}
 0   & 0   \\
 0   & N   \\
\end{matrix} }} \right]U^\ast  ,
\end{align}
where $T$ is   non-singular, and $N$ is  nilpotent.
Furthermore,
the core-EP inverse of $A $ is
  \begin{align}
 \label{2-5}
 A^{\tiny\textcircled{\dag}}
 =
 U\left[ {{\begin{matrix}
 {T^{ - 1}}   & 0   \\
 0   & 0   \\
\end{matrix} }} \right]U^\ast.
\end{align}
\end{lemma}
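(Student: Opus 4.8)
The plan is to start from the core-EP decomposition $A = A_1 + A_2$ guaranteed by Lemma \ref{core-EP-Decomposition}, and then invoke Lemma \ref{Theorem-2-2} implicitly by reconstructing the unitary block form. First I would observe that since $A_1 \in \mathbb{C}_n^{\mbox{\rm\footnotesize\texttt{CM}}}$, it has index one, so its range space has a fixed dimension, say $r$; I would then apply the core decomposition (\ref{2-1})--(\ref{2-2}) to $A_1$ to obtain a unitary $U$ with $A_1 = U\left[\begin{smallmatrix} T & S \\ 0 & 0 \end{smallmatrix}\right]U^\ast$ where $T$ is non-singular of size $r\times r$. The work then reduces to showing that, with respect to this same $U$, the matrix $A_2$ must have the form $U\left[\begin{smallmatrix} 0 & 0 \\ 0 & N \end{smallmatrix}\right]U^\ast$ with $N$ nilpotent.

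The key step uses condition (iii) of Lemma \ref{core-EP-Decomposition}, namely $A_1^\ast A_2 = 0$ and $A_2 A_1 = 0$. Writing $A_2 = U\left[\begin{smallmatrix} P & Q \\ R & N \end{smallmatrix}\right]U^\ast$ in the same block partition, the equation $A_1^\ast A_2 = 0$ expands to $\left[\begin{smallmatrix} T^\ast & 0 \\ S^\ast & 0 \end{smallmatrix}\right]\left[\begin{smallmatrix} P & Q \\ R & N \end{smallmatrix}\right] = 0$, which forces $T^\ast P = 0$ and $T^\ast Q = 0$; since $T$ is non-singular this gives $P = 0$ and $Q = 0$. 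Then $A_2 A_1 = 0$ becomes $\left[\begin{smallmatrix} 0 & 0 \\ R & N \end{smallmatrix}\right]\left[\begin{smallmatrix} T & S \\ 0 & 0 \end{smallmatrix}\right] = 0$, which yields $RT = 0$, hence $R = 0$. So $A_2 = U\left[\begin{smallmatrix} 0 & 0 \\ 0 & N \end{smallmatrix}\right]U^\ast$. Finally, $A_2^k = U\left[\begin{smallmatrix} 0 & 0 \\ 0 & N^k \end{smallmatrix}\right]U^\ast = 0$ from condition (ii) forces $N^k = 0$, so $N$ is nilpotent.

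For the formula (\ref{2-5}), I would compute $A^{\tiny\textcircled{\dag}}$ directly from its definition $A^{\tiny\textcircled{\dag}} = A^k\left(\left(A^\ast\right)^k A^{k+1}\right)^- A^k$ using the block form $A = U\left[\begin{smallmatrix} T & S \\ 0 & N \end{smallmatrix}\right]U^\ast$. A preliminary computation shows $A^k = U\left[\begin{smallmatrix} T^k & \widetilde{S} \\ 0 & 0 \end{smallmatrix}\right]U^\ast$ for some $\widetilde{S}$ (using $N^k = 0$), so $\mathcal{R}(A^k) = \mathcal{R}\left(U\left[\begin{smallmatrix} I \\ 0 \end{smallmatrix}\right]\right)$; combined with the known characterization of the core-EP inverse as the outer inverse with prescribed range $\mathcal{R}(A^k)$ and null space $\mathcal{N}(A^k)$, one checks that $U\left[\begin{smallmatrix} T^{-1} & 0 \\ 0 & 0 \end{smallmatrix}\right]U^\ast$ satisfies all the defining conditions, giving (\ref{2-5}).

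The main obstacle I anticipate is not the algebra but making sure the unitary $U$ furnished by the core decomposition of $A_1$ is the \emph{same} one in which $A_2$ takes the clean anti-diagonal form; this hinges entirely on condition (iii), and in particular on the fact that it is $A_1^\ast A_2 = 0$ (not $A_1 A_2 = 0$) that kills the upper blocks of $A_2$ — the conjugate transpose is essential here, since $A_1 A_2 = 0$ alone would only give $TP + SR = 0$ and $TQ + SN = 0$, which do not immediately vanish. A secondary point requiring care is the justification that the core-EP inverse depends only on $A$ through $A^k$ in the range/null-space sense, so that the $g$-inverse appearing in the definition can be replaced by any convenient choice without changing $A^{\tiny\textcircled{\dag}}$; this is standard but should be cited from \cite{Manjunatha2014lma} or verified on the block form.
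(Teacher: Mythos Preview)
The paper does not prove this lemma; it is quoted from \cite{Wang2016laa-A} as a preliminary result, so there is no in-paper argument to compare against. Your reconstruction is nonetheless the natural one and is essentially how the result is established in the cited source: take the core decomposition of the index-one part $A_1$ to fix $U$, then use the orthogonality-type conditions (iii) of Lemma~\ref{core-EP-Decomposition} to force the block structure of $A_2$. Your emphasis on the role of $A_1^\ast A_2 = 0$ (rather than $A_1 A_2 = 0$) in annihilating the top row of $A_2$ is exactly the point of the core-EP decomposition versus the core-nilpotent one.

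One genuine slip to fix: in your verification of (\ref{2-5}) you describe the core-EP inverse as the outer inverse with range $\mathcal{R}(A^k)$ and null space $\mathcal{N}(A^k)$. The null space should be $\mathcal{N}\bigl((A^k)^\ast\bigr)$, not $\mathcal{N}(A^k)$. Indeed, from $A^k = U\left[\begin{smallmatrix} T^k & \widetilde S \\ 0 & 0 \end{smallmatrix}\right]U^\ast$ the null space of $A^k$ consists of vectors whose first block coordinate equals $-T^{-k}\widetilde S$ times the second, which does \emph{not} match the null space of $U\left[\begin{smallmatrix} T^{-1} & 0 \\ 0 & 0 \end{smallmatrix}\right]U^\ast$ unless $\widetilde S = 0$. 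With the corrected null space $\mathcal{N}\bigl((A^k)^\ast\bigr)$ the verification goes through, or alternatively you can bypass the range/null-space characterization entirely and simply check that $X = U\left[\begin{smallmatrix} T^{-1} & 0 \\ 0 & 0 \end{smallmatrix}\right]U^\ast$ satisfies $XAX = X$, $(AX)^\ast = AX$, $XA^{k+1} = A^k$ and $\mathcal{R}(X)\subseteq\mathcal{R}(A^k)$, which pins it down uniquely by \cite{Manjunatha2014lma}.
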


\section{ WG inverse}
\label{Sec-new-GI} 
In this section,
we apply the core-EP decomposition to introduce a generalized group inverse (i.e. the WG inverse)
and
consider some characterizations of   the generalized inverse.

\subsection{  Definition and properties of the WG inverse}

Let  $A\in \mathbb{C}_{n,n}$
be with  ${\rm Ind}(A)=k$,
and
consider the system of equations
 \begin{align}
 \label{3-1}%
\left( 2' \right)~ A X^{2} = X, \ \
\left( 3^c \right)~  {AX}    = A^{\tiny\textcircled{\dag}} A.
\end{align}
Let the core-EP decomposition  of $A$ be as in  (\ref{2-4}).
Then
the core-EP inverse $A^{\tiny\textcircled{\dag}}$
of $A$ can be formed as:
\begin{align}
\label{3-2}
A^{\tiny\textcircled{\dag}}
=
U\left[ {{\begin{matrix}
T^{-1}   & 0  \\
0   & 0   \\
\end{matrix} }} \right]U^\ast.
\end{align}
Suppose that
\begin{align}
\label{3-4}
X=
 U\left[ {{\begin{matrix}
T^{-1}    &T^{-2}S    \\
0         & 0  \\
\end{matrix} }} \right]U^\ast.
\end{align}
Substituting
 (\ref{3-4}) for $X$ in  (\ref{3-1})
 and
applying
(\ref{3-2} ),
 we derive
\begin{align*}
 A X^{2} - X
 &
 =U\left[ {{\begin{matrix}
 T    & S   \\
 0   & N   \\
\end{matrix} }} \right]  \left[ {{\begin{matrix}
T^{-2}    &T^{-3}S     \\
0         & 0  \\
\end{matrix} }} \right]U^\ast-U\left[ {{\begin{matrix}
T^{-1}    &T^{-2}S   \\
0         & 0  \\
\end{matrix} }} \right]U^\ast=0;
 \\
 {AX} - A^{\tiny\textcircled{\dag}} A
 &
 =  U\left[ {{\begin{matrix}
 T    & S   \\
 0   & N   \\
\end{matrix} }} \right] \left[ {{\begin{matrix}
T^{-1}    &T^{-2}S    \\
0         & 0  \\
\end{matrix} }} \right]U^\ast
-
U\left[ {{\begin{matrix}
 T^{-1}   & 0  \\
 0   & 0   \\
\end{matrix} }} \right]  \left[ {{\begin{matrix}
 T    & S   \\
 0   & N   \\
\end{matrix} }} \right]U^\ast=0.
\end{align*}
Therefore,
(\ref{3-4})
is the solution of the system to equations (\ref{3-1}).

Furthermore,
suppose that both
$X$
and
$\mathcal{X}$
  satisfy  (\ref{3-1}),
then
  \begin{align}
\nonumber
X=AX^2= A^{\tiny\textcircled{\dag}} AX=A^{\tiny\textcircled{\dag}}A^{\tiny\textcircled{\dag}}A
=A^{\tiny\textcircled{\dag}}A\mathcal{X}
=A\mathcal{X}^2
=\mathcal{X},
\end{align}
that is,
the solution to the system of equations (\ref{3-1}) is unique.
We have the following:
\begin{theorem}
\label{Th-3-1}
The system of equations (\ref{3-1})  is consistent
and
has a unique solution (\ref{3-4}).
\end{theorem}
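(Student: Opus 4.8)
The plan is to establish the two assertions of Theorem \ref{Th-3-1} separately: \emph{consistency} (at least one solution exists) and \emph{uniqueness} (at most one solution exists). Both will be carried out through the core-EP decomposition (\ref{2-4}) with its unitary $U$, writing everything in the $2\times2$ block form relative to $U$, so that matrix computations reduce to block arithmetic involving the nonsingular block $T$, the block $S$, and the nilpotent block $N$.

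For consistency, the strategy is to exhibit the explicit candidate (\ref{3-4}), namely $X = U\bigl[\begin{smallmatrix} T^{-1} & T^{-2}S \\ 0 & 0\end{smallmatrix}\bigr]U^\ast$, and verify directly that it satisfies both equations $\left(2'\right)$ and $\left(3^c\right)$ of (\ref{3-1}). Using $A = A_1+A_2 = U\bigl[\begin{smallmatrix} T & S \\ 0 & N\end{smallmatrix}\bigr]U^\ast$ and the formula (\ref{2-5}) for $A^{\tiny\textcircled{\dag}}$, one computes $AX^2$ and $AX$ in block form. The key observations are that $X$ has zero second block-row, so $X^2 = U\bigl[\begin{smallmatrix} T^{-2} & T^{-3}S \\ 0 & 0\end{smallmatrix}\bigr]U^\ast$, and that multiplying $A$ on the left by a matrix with zero second block-row annihilates the contribution of $N$; the block computations then collapse to $AX^2 = X$ and $AX = A^{\tiny\textcircled{\dag}}A$. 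This step is essentially the displayed verification already in the excerpt, so it presents no real difficulty.

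For uniqueness, the approach is the short algebraic chain already sketched: if $X$ satisfies (\ref{3-1}), then repeatedly substituting $X = AX^2$ and $AX = A^{\tiny\textcircled{\dag}}A$ gives $X = AX^2 = (AX)X = A^{\tiny\textcircled{\dag}}AX$; applying $(3^c)$ once more inside this expression (after first rewriting $AX$) yields $X = A^{\tiny\textcircled{\dag}}(AX) = A^{\tiny\textcircled{\dag}}A^{\tiny\textcircled{\dag}}A$, an expression independent of $X$. Since any other solution $\mathcal{X}$ equals the same fixed matrix $A^{\tiny\textcircled{\dag}}A^{\tiny\textcircled{\dag}}A$ by the identical argument, we get $X = \mathcal{X}$. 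The only point requiring a moment's care is the order of substitutions so that each step is licensed by one of the two defining equations; this is exactly the chain $X = AX^2 = A^{\tiny\textcircled{\dag}}AX = A^{\tiny\textcircled{\dag}}A^{\tiny\textcircled{\dag}}A = A^{\tiny\textcircled{\dag}}A\mathcal{X} = A\mathcal{X}^2 = \mathcal{X}$ displayed in the excerpt.

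The main (though still modest) obstacle is bookkeeping rather than conceptual: one must be confident that the block form (\ref{2-4}) together with (\ref{2-5}) is being used correctly, in particular that $A^{\tiny\textcircled{\dag}}A = U\bigl[\begin{smallmatrix} I & T^{-1}S \\ 0 & 0\end{smallmatrix}\bigr]U^\ast$ and that this is precisely $AX$ for the candidate $X$. Once that identity is in hand, both halves of the theorem follow immediately, and the statement that (\ref{3-4}) is \emph{the} solution is justified.
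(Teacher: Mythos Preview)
Your proposal is correct and follows essentially the same approach as the paper: verify consistency by direct block computation with the candidate (\ref{3-4}) using the core-EP decomposition (\ref{2-4}) and the formula (\ref{2-5}) for $A^{\tiny\textcircled{\dag}}$, and establish uniqueness via the substitution chain $X = AX^{2} = A^{\tiny\textcircled{\dag}}AX = A^{\tiny\textcircled{\dag}}A^{\tiny\textcircled{\dag}}A = A^{\tiny\textcircled{\dag}}A\mathcal{X} = A\mathcal{X}^{2} = \mathcal{X}$. There is no substantive difference from the paper's argument.
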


\begin{definition}
Let $A\in \mathbb{C}_{n,n}$   be a matrix of index $k$.
The WG inverse of $A$,
denoted as $  A^{\tiny\textcircled{W}}$,
 is defined to be
the solution to
the system (\ref{3-1}) .
\end{definition}

\begin{remark}
\label{Remark-3-3}
 When  $A\in \mathbb{C}^{\mbox{\rm\footnotesize\texttt{CM}}}_{n }$,
we have
 $  A^{\tiny\textcircled{W}}=A^\#$.
\end{remark}

\begin{remark}
\label{Remark-3-7}
In  \cite[Definition 1]{Campbel1978llaa},
the notion of weak Drazin inverse was given:
let $A\in \mathbb{C}_{n, n}$ and  ${\rm Ind}(A)=k$,
then
$X$ is  a  weak Drazin inverse of $A$ if $X$ satisfies
(1$^k$).
Applying (\ref{3-4}),
it is easy to check that the WG inverse $A^{\tiny\textcircled{W}}$ is a weak Drazin inverse of $A$.
\end{remark}

More details about the weak Drazin inverse
can be seen
in \cite{Campbel1978llaa,Campbell2009book,Wang2016laa}.

\bigskip

In the following example,
we explain that the WG inverse is
different from
the Drazin, DMP, core-EP
and
 B-T inverses.

\begin{example}
\label{Ex3-1}
Let
$A=\left[ {{\begin{matrix}
 1  & 0  & 1 & 0 \\
 0  & 1 & 0 & 1  \\
 0  & 0 & 0 & 1   \\
 0  & 0 & 0 & 0   \\
\end{matrix} }} \right]$.
It is easy to check that ${\rm Ind}(A) = 2$,
the Moore-Penrose inverse   $A^{\dag}$
and
the Drazin inverse $A^D$
are
$$A^{\dag}=\left[ {{\begin{matrix}
 0.5  & 0  & 0 & 0 \\
 0  & 1 & -1 & 0   \\
 0.5 & 0 & 0 & 0   \\
 0  & 0 & 1 & 0   \\
\end{matrix} }} \right] \ {\mbox{and }} \
A^{D}=\left[ {{\begin{matrix}
     1   &  0   &  1 &    1\\
     0   &  1   &  0  &   1\\
     0   &  0   &  0 &    0\\
     0   &  0   &  0  &   0 \\
\end{matrix} }} \right],$$
the DMP inverse   $A^{d, \dag}$
and
the B-T inverse $A^\diamondsuit$
are
$$A^{d,\dag}=A^{D}AA^{\dag}=\left[ {{\begin{matrix}
  1     &     0     &   1    &    0\\
  0     &     1     &   0    &    0\\
  0     &     0     &   0    &    0\\
  0     &     0     &    0   &    0\\
\end{matrix} }} \right] \ {\mbox{and }} \
A^{\diamondsuit}
=\left(A^2A^{\dag}\right)^\dag
=\left[ {{\begin{matrix}
         0.5   &    0     &    0   &      0\\
         0     &    1     &    0   &      0\\
         0.5   &    0     &    0   &      0\\
         0     &    0     &    0    &     0\\
\end{matrix} }} \right],$$
and
the core-EP inverse $A^{\tiny\textcircled{\dag}}$
and
the WG inverse $A^{\tiny\textcircled{W}} $
are
$$
A^{\tiny\textcircled{\dag}}=\left[ {{\begin{matrix}
     1   &  0   &  0 &    0\\
     0   &  1   &  0  &   0\\
     0   &  0   &  0 &    0\\
     0   &  0   &  0  &   0 \\
\end{matrix} }} \right]
\ {\mbox{and }} \
A^{\tiny\textcircled{W}}=\left[ {{\begin{matrix}
  1     &     0     &   1    &    0\\
  0     &     1     &   0    &    1\\
  0     &     0     &   0    &    0\\
  0     &     0     &    0   &    0\\
\end{matrix} }} \right].$$
\end{example}

\bigskip

\subsection{Characterizations of the WG inverse}
   Let $A = \widehat{A}_1 +\widehat{ A}_2 $
   be the   core-nilpotent  decomposition  of
   $A\in \mathbb{C}_{n,n}$.
 Then
 $A^D=\widehat{A}_1^\#$.
Applying   Lemma \ref{core-EP-Decomposition},
 (\ref{2-4})
and (\ref{3-4}),
we have the following theorem.
\begin{theorem}
\label{Remark-3-2}%
 Let
 the core-EP decomposition of $A\in \mathbb{C}_{n,n}$
 be as in (\ref{2-4}).
Then
\begin{align}
\label{3-8}
A^{\tiny\textcircled{W}}= {A}_1^{\#}=
 U\left[ {{\begin{matrix}
T^{-1}    &T^{-2}S    \\
0         & 0  \\
\end{matrix} }} \right]U^\ast.
\end{align}
\end{theorem}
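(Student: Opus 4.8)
The plan is to establish the identity $A^{\tiny\textcircled{W}} = A_1^{\#}$ by showing that the matrix on the right-hand side of (\ref{3-8}) coincides with the group inverse of $A_1$, and then invoke Theorem \ref{Th-3-1} (uniqueness) together with the explicit form (\ref{3-4}) of $A^{\tiny\textcircled{W}}$. Concretely, recall from Lemma \ref{Theorem-2-2} that in the unitary coordinates of (\ref{2-4}) we have $A_1 = U\,\mathrm{diag\text{-}block}\!\left[\begin{smallmatrix} T & S \\ 0 & 0\end{smallmatrix}\right]U^\ast$ with $T$ non-singular. Since the leading block $T$ of $A_1$ is invertible, $A_1$ has index one, i.e. $A_1 \in \mathbb{C}^{\mbox{\rm\footnotesize\texttt{CM}}}_{n}$, so its group inverse exists. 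Comparing the block pattern $\left[\begin{smallmatrix} T & S \\ 0 & 0\end{smallmatrix}\right]$ with the core decomposition (\ref{2-1})--(\ref{2-6}) of a core-invertible matrix (with the roles of $T=\Sigma K$ and $S=\Sigma L$), formula (\ref{2-6}) gives immediately
\[
A_1^{\#} = U\left[ {{\begin{matrix} T^{-1} & T^{-2}S \\ 0 & 0 \end{matrix}}} \right]U^\ast .
\]

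Next I would note that the right-hand side of this last display is exactly the matrix $X$ in (\ref{3-4}), which by Theorem \ref{Th-3-1} is the unique solution of the system (\ref{3-1}), hence equals $A^{\tiny\textcircled{W}}$ by definition. Chaining the two equalities yields $A^{\tiny\textcircled{W}} = A_1^{\#} = U\left[\begin{smallmatrix} T^{-1} & T^{-2}S \\ 0 & 0\end{smallmatrix}\right]U^\ast$, which is (\ref{3-8}). Alternatively, and perhaps more in the spirit of the preceding paragraph of the paper, one can argue directly that $A_1^{\#}$ satisfies $(2')$ and $(3^c)$: using $A = A_1 + A_2$ with $A_1 A_2 = 0$ (from (iii) of Lemma \ref{core-EP-Decomposition}) one checks $A (A_1^{\#})^2 = (A_1+A_2)(A_1^{\#})^2 = A_1 (A_1^{\#})^2 = A_1^{\#}$, and $A A_1^{\#} = A_1 A_1^{\#} = A_1 A_1^{\#}$, which one identifies with $A^{\tiny\textcircled{\dag}} A$ by comparing (\ref{2-5}) and the block forms — indeed $A_1 A_1^{\#} = U\left[\begin{smallmatrix} I & T^{-1}S \\ 0 & 0\end{smallmatrix}\right]U^\ast = A^{\tiny\textcircled{\dag}} A$. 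Then uniqueness in Theorem \ref{Th-3-1} finishes the proof.

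The only real obstacle is bookkeeping: one must be careful that the $S$ appearing in the core-EP decomposition block (\ref{2-4}) plays the same role as the $S$ in the core decomposition formula (\ref{2-6}), so that applying (\ref{2-6}) to $A_1$ is legitimate; this is justified because $A_1$ is core-invertible and the block $\left[\begin{smallmatrix} T & S \\ 0 & 0\end{smallmatrix}\right]$ has precisely the shape required by Lemma 2.2, with $T$ non-singular. Apart from this identification, the computation is a routine block-matrix multiplication and the identity falls out. I would present the argument via the direct verification route (checking $(2')$ and $(3^c)$ for $A_1^\#$), since it parallels the calculation already carried out for (\ref{3-4}) just above the statement and keeps the exposition self-contained.
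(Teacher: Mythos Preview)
Your main route---identify the block form of $A_1^{\#}$ and match it with (\ref{3-4})---is exactly what the paper does; the paper's entire argument is the one-line ``Applying Lemma~\ref{core-EP-Decomposition}, (\ref{2-4}) and (\ref{3-4})'' preceding the statement.

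Two small corrections are in order, though. In your alternative verification you invoke ``$A_1 A_2 = 0$ (from (iii))'', but condition (iii) of Lemma~\ref{core-EP-Decomposition} asserts $A_1^\ast A_2 = A_2 A_1 = 0$, and in general $A_1 A_2 = U\left[\begin{smallmatrix}0 & SN\\ 0 & 0\end{smallmatrix}\right]U^\ast \neq 0$. What you actually need for $A(A_1^{\#})^2 = A_1(A_1^{\#})^2$ and $AA_1^{\#}=A_1A_1^{\#}$ is $A_2 A_1^{\#} = 0$, and this follows from $A_2 A_1 = 0$ together with $\mathcal{R}(A_1^{\#}) = \mathcal{R}(A_1)$; so fix the citation. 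Second, formula (\ref{2-6}) is stated for the SVD-based core decomposition (\ref{2-1}), with $T=\Sigma K$, $S=\Sigma L$ and $KK^\ast+LL^\ast=I$; the block form (\ref{2-4}) of $A_1$ need not satisfy those constraints, so your justification ``has precisely the shape required by Lemma~2.2'' is not quite right. The cleanest fix is to verify directly that $X=U\left[\begin{smallmatrix}T^{-1}&T^{-2}S\\0&0\end{smallmatrix}\right]U^\ast$ satisfies $A_1 X A_1 = A_1$, $X A_1 X = X$, $A_1 X = X A_1$, which is a two-line block computation.
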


\bigskip

Since
 \begin{align*}
 A A^{\tiny\textcircled{\dag}}A
 &
 =
 U\left[ {{\begin{matrix}
 T    & S   \\
 0   & N   \\
\end{matrix} }} \right]\left[ {{\begin{matrix}
 T^{-1}    & 0  \\
 0   & 0   \\
\end{matrix} }} \right]\left[ {{\begin{matrix}
 T    & S   \\
 0   & N   \\
\end{matrix} }} \right]U^\ast
\\
 &
 =
 U\left[ {{\begin{matrix}
 I & 0 \\
 0   & 0  \\
\end{matrix} }} \right]\left[ {{\begin{matrix}
 T    & S   \\
 0   & N   \\
\end{matrix} }} \right]U^\ast
 =
 U\left[ {{\begin{matrix}
 T    & S   \\
 0    & 0   \\
\end{matrix} }} \right] U^\ast,
 \end{align*}
 and
 \begin{align*}
\left(A^{\tiny\textcircled{\dag}}\right)^2
 &
 =\left(A^2\right) ^{\tiny\textcircled{\dag}}
 =U\left[ {{\begin{matrix}
 T^{-2}    & 0    \\
 0         & 0   \\
\end{matrix} }} \right]U^\ast,
 \end{align*}
we have the following theorem:
\begin{theorem}
Let $A\in \mathbb{C}_{n,n}$ be with ${\rm Ind}(A)=k$.
Then
  \begin{align*}
 A^{\tiny\textcircled{W} }
  =\left(AA^{\tiny\textcircled{\dag}}A\right)^ {\#}
  =\left(A^{\tiny\textcircled{\dag}}\right)^2A
    =\left(A^2\right)^{\tiny\textcircled{\dag}}A.
 \end{align*}
\end{theorem}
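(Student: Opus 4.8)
The plan is to verify each of the three claimed identities by reducing everything to the block form afforded by the core-EP decomposition~(\ref{2-4}) and the explicit formula~(\ref{3-8}) for $A^{\tiny\textcircled{W}}$. The two displayed computations immediately preceding the statement already put $AA^{\tiny\textcircled{\dag}}A$ and $\left(A^{\tiny\textcircled{\dag}}\right)^2$ into the convenient forms $U\left[\begin{smallmatrix} T & S \\ 0 & 0 \end{smallmatrix}\right]U^\ast$ and $U\left[\begin{smallmatrix} T^{-2} & 0 \\ 0 & 0 \end{smallmatrix}\right]U^\ast$ respectively, and they record $\left(A^{\tiny\textcircled{\dag}}\right)^2=\left(A^2\right)^{\tiny\textcircled{\dag}}$; so the remaining work is genuinely just three short block multiplications.

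First I would treat $\left(AA^{\tiny\textcircled{\dag}}A\right)^{\#}$. Since $AA^{\tiny\textcircled{\dag}}A=U\left[\begin{smallmatrix} T & S \\ 0 & 0 \end{smallmatrix}\right]U^\ast$ and $T$ is nonsingular, this matrix has index one, and by the core-decomposition formula~(\ref{2-6}) for the group inverse of a matrix of the shape $U\left[\begin{smallmatrix} T & S \\ 0 & 0 \end{smallmatrix}\right]U^\ast$ (with the roles of $T$ and $S$ as in Lemma~\ref{Theorem-2-2}), its group inverse is $U\left[\begin{smallmatrix} T^{-1} & T^{-2}S \\ 0 & 0 \end{smallmatrix}\right]U^\ast$, which is exactly $A^{\tiny\textcircled{W}}$ by~(\ref{3-8}). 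Alternatively one can just note that $AA^{\tiny\textcircled{\dag}}A = A_1$ in the notation of Lemma~\ref{core-EP-Decomposition}, so $\left(AA^{\tiny\textcircled{\dag}}A\right)^{\#}=A_1^{\#}=A^{\tiny\textcircled{W}}$ by Theorem~\ref{Remark-3-2}.

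Next, for $\left(A^{\tiny\textcircled{\dag}}\right)^2A$ I would multiply $U\left[\begin{smallmatrix} T^{-2} & 0 \\ 0 & 0 \end{smallmatrix}\right]U^\ast$ by $A=U\left[\begin{smallmatrix} T & S \\ 0 & N \end{smallmatrix}\right]U^\ast$, obtaining $U\left[\begin{smallmatrix} T^{-1} & T^{-2}S \\ 0 & 0 \end{smallmatrix}\right]U^\ast = A^{\tiny\textcircled{W}}$. Finally, using the already-noted equality $\left(A^{\tiny\textcircled{\dag}}\right)^2=\left(A^2\right)^{\tiny\textcircled{\dag}}$, the identity $\left(A^2\right)^{\tiny\textcircled{\dag}}A = \left(A^{\tiny\textcircled{\dag}}\right)^2A$ follows at once, completing the chain. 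I do not anticipate a real obstacle here; if anything, the only point requiring a word of care is the justification that the group inverse of $U\left[\begin{smallmatrix} T & S \\ 0 & 0 \end{smallmatrix}\right]U^\ast$ has the stated closed form — this is where one invokes~(\ref{2-6}) — and making sure that the $T$ appearing there really is the nonsingular block $T=\Sigma K$ of the core decomposition of $A_1$, which it is because $AA^{\tiny\textcircled{\dag}}A=A_1$ and~(\ref{2-4}) exhibits $A_1$ in precisely that form.
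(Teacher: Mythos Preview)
Your proposal is correct and follows essentially the same route as the paper: both rely on the block forms displayed immediately before the theorem, namely $AA^{\tiny\textcircled{\dag}}A = U\left[\begin{smallmatrix} T & S \\ 0 & 0 \end{smallmatrix}\right]U^\ast = A_1$ and $\left(A^{\tiny\textcircled{\dag}}\right)^2 = \left(A^2\right)^{\tiny\textcircled{\dag}} = U\left[\begin{smallmatrix} T^{-2} & 0 \\ 0 & 0 \end{smallmatrix}\right]U^\ast$, after which the three identities drop out of~(\ref{3-8}) and one block multiplication. Your second, cleaner route to the first identity --- observing $AA^{\tiny\textcircled{\dag}}A = A_1$ and then citing Theorem~\ref{Remark-3-2} to get $\left(AA^{\tiny\textcircled{\dag}}A\right)^{\#} = A_1^{\#} = A^{\tiny\textcircled{W}}$ --- is preferable to invoking~(\ref{2-6}), since~(\ref{2-6}) is literally stated only for the SVD-based core decomposition~(\ref{2-1}) rather than the core-EP block form~(\ref{2-4}).
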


Let the core-EP decomposition  of $A$ be as in  (\ref{2-4}).
Then
\begin{align}
\label{3-3}
 A^k=U\left[ {{\begin{matrix}
 T^k   & \Phi   \\
 0   & 0   \\
\end{matrix} }} \right]U^\ast, \
 A^{k+1}=U\left[ {{\begin{matrix}
 T^{k+1}   & T\Phi  \\
 0   & 0   \\
\end{matrix} }} \right]U^\ast,
\end{align}
where  $\Phi ={\sum\limits_{i = 1}^k {T^{i - 1}SN^{k - i}}}$.
It follows that
\begin{align}
\nonumber
 A^k
 \left(A^{k+2}\right)^{\tiny\textcircled{\#} }A
 &
  =
 U\left[ {{\begin{matrix}
 T^k   & \Phi   \\
 0   & 0   \\
\end{matrix} }} \right]
\left[ {{\begin{matrix}
 T^{-(k+2)}   & 0  \\
 0   & 0   \\
\end{matrix} }} \right]
\left[ {{\begin{matrix}
 T    & S  \\
 0    & N   \\
\end{matrix} }} \right]U^\ast
\\
 &
 =
U\left[ {{\begin{matrix}
 T^{-1}    & T^{-2}S   \\
 0   & 0   \\
\end{matrix} }} \right]U^\ast
\label{3-16}
=
A^{\tiny\textcircled{W}},
\\
\nonumber
 \left( A^{k+2}
 \left(A^k\right)^{\dag}\right)^{\dag}A
 &
  =
 \left( A^2A^{k }
 \left(A^k\right)^{\dag}\right)^{\dag}A
 \\
 &
 =
 U
\left[ {{\begin{matrix}
 T^{2}   & 0  \\
 0   & 0   \\
\end{matrix} }} \right]^{\dag}
\left[ {{\begin{matrix}
 T    & S  \\
 0    & N   \\
\end{matrix} }} \right]U^\ast
\label{3-17}
=
A^{\tiny\textcircled{W} }.
\end{align}
Therefore, we have the following theorem.
\begin{theorem}
Let $A\in \mathbb{C}_{n,n}$ be with ${\rm Ind}(A)=k$.
Then
  \begin{align*}
 A^{\tiny\textcircled{W} }
  =A^k
 \left(A^{k+2}\right)^{\tiny\textcircled{\#} }A
  =\left( A^{ 2} P_{A^k}\right)^{\dag}A.
 \end{align*}
\end{theorem}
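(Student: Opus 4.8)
The plan is to verify both claimed formulas by reducing everything to the block form relative to the unitary $U$ supplied by Lemma~\ref{Theorem-2-2}, where $A = U\left[{\begin{matrix} T & S \\ 0 & N \end{matrix}}\right]U^\ast$ with $T$ nonsingular and $N$ nilpotent of index $k$, and where by Theorem~\ref{Th-3-1} (via (\ref{3-4})) we already know $A^{\tiny\textcircled{W}} = U\left[{\begin{matrix} T^{-1} & T^{-2}S \\ 0 & 0 \end{matrix}}\right]U^\ast$. Most of the required intermediate computations are in fact already displayed in the excerpt: equation~(\ref{3-3}) gives the block forms of $A^k$ and $A^{k+1}$ with the off-diagonal block $\Phi = \sum_{i=1}^k T^{i-1} S N^{k-i}$, and the two chains (\ref{3-16}) and (\ref{3-17}) carry out exactly the two matrix multiplications $A^k\bigl(A^{k+2}\bigr)^{\tiny\textcircled{\#}} A$ and $\bigl(A^{k+2}(A^k)^\dagger\bigr)^\dagger A$, landing on $U\left[{\begin{matrix} T^{-1} & T^{-2}S \\ 0 & 0 \end{matrix}}\right]U^\ast = A^{\tiny\textcircled{W}}$ in each case. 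So the proof is essentially: ``combine (\ref{3-3}), (\ref{3-16}) and (\ref{3-17})''.

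First I would pin down why the two core/core-EP inverses appearing inside those expressions have the block forms used in (\ref{3-16})--(\ref{3-17}). The matrix $A^{k+2}$ has index one: since $N^k = 0$, powers $A^{j}$ for $j \ge k$ all have the form $U\left[{\begin{matrix} T^{j} & T^{j-k}\Phi \\ 0 & 0 \end{matrix}}\right]U^\ast$ up to the obvious reindexing of $\Phi$, so its nonzero block $T^{k+2}$ is nonsingular and hence $A^{k+2} \in \mathbb{C}^{\texttt{CM}}_n$; then Lemma~2 (formula (\ref{2-2}), with $T$ replaced by $T^{k+2}$ and $S$ by the corresponding off-diagonal block) gives $\bigl(A^{k+2}\bigr)^{\tiny\textcircled{\#}} = U\left[{\begin{matrix} T^{-(k+2)} & 0 \\ 0 & 0 \end{matrix}}\right]U^\ast$, exactly as used. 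For the second identity, note $A^{k+2}(A^k)^\dagger = A^2 A^k (A^k)^\dagger = A^2 P_{A^k}$ where $P_{A^k} = A^k (A^k)^\dagger$ is the orthogonal projector onto $\mathcal{R}(A^k)$; computing in blocks, $A^k (A^k)^\dagger$ kills the second block row and $A^2 P_{A^k} = U\left[{\begin{matrix} T^2 & 0 \\ 0 & 0 \end{matrix}}\right]U^\ast$ (the $S,N$ contributions cancel against the projector), whose Moore--Penrose inverse is $U\left[{\begin{matrix} T^{-2} & 0 \\ 0 & 0 \end{matrix}}\right]U^\ast$; multiplying on the right by $A$ then reproduces (\ref{3-17}). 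Thus the second displayed formula $A^{\tiny\textcircled{W}} = \bigl(A^2 P_{A^k}\bigr)^\dagger A$ is just a restatement of (\ref{3-17}) once one observes $A^2 P_{A^k} = A^{k+2}(A^k)^\dagger$.

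The only genuinely delicate point — the ``main obstacle'', such as it is — is the bookkeeping in the off-diagonal blocks: one must check that the $\Phi$-terms and $N$-terms really do cancel so that the final $(1,2)$ block of $A^k\bigl(A^{k+2}\bigr)^{\tiny\textcircled{\#}} A$ comes out to exactly $T^{-2}S$ and not $T^{-2}S$ plus spurious $\Phi$- or $N$-contributions. Concretely, $U\left[{\begin{matrix} T^k & \Phi \\ 0 & 0 \end{matrix}}\right]\!\left[{\begin{matrix} T^{-(k+2)} & 0 \\ 0 & 0 \end{matrix}}\right] = U\left[{\begin{matrix} T^{-2} & 0 \\ 0 & 0 \end{matrix}}\right]$ — the $\Phi$ block is annihilated by the second zero column of the middle factor — and then right-multiplication by $\left[{\begin{matrix} T & S \\ 0 & N \end{matrix}}\right]$ gives $\left[{\begin{matrix} T^{-1} & T^{-2}S \\ 0 & 0 \end{matrix}}\right]$, with the $N$-block again killed by the zero second row. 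The same zero-block annihilation handles (\ref{3-17}). So once the block forms are in hand the cancellations are automatic, and no subtler argument is needed.

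Finally, I would note that both equalities can alternatively be derived without any block computation at all, directly from the characterizations already proved: since $\bigl(A^2\bigr)^{\tiny\textcircled{\dag}} = \bigl(A^{\tiny\textcircled{\dag}}\bigr)^2$ and $A^{\tiny\textcircled{W}} = \bigl(A^{\tiny\textcircled{\dag}}\bigr)^2 A$ (the preceding theorem), it suffices to show $A^k\bigl(A^{k+2}\bigr)^{\tiny\textcircled{\#}} A = \bigl(A^{\tiny\textcircled{\dag}}\bigr)^2 A$ and $\bigl(A^2 P_{A^k}\bigr)^\dagger A = \bigl(A^{\tiny\textcircled{\dag}}\bigr)^2 A$, which reduce to the identities $A^k\bigl(A^{k+2}\bigr)^{\tiny\textcircled{\#}} = \bigl(A^{\tiny\textcircled{\dag}}\bigr)^2$ and $\bigl(A^2 P_{A^k}\bigr)^\dagger = \bigl(A^{\tiny\textcircled{\dag}}\bigr)^2$ — both of which are transparent in the block picture above. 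I would present the block-form computation (i.e. the argument assembled from (\ref{3-3}), (\ref{3-16}), (\ref{3-17})) as the main proof, since it is the shortest self-contained route and matches the style of the surrounding results.
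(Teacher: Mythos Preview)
Your proposal is correct and follows essentially the same approach as the paper: both verify the two formulas by direct block-matrix computation in the core-EP decomposition, exactly via the displays (\ref{3-3}), (\ref{3-16}) and (\ref{3-17}). You in fact supply more justification than the paper does (explaining why $A^{k+2}\in\mathbb{C}^{\mbox{\rm\footnotesize\texttt{CM}}}_n$, why its core inverse has the stated block form, and why $A^2P_{A^k}=U\left[\begin{smallmatrix}T^2&0\\0&0\end{smallmatrix}\right]U^\ast$), and your closing alternative route through $A^{\tiny\textcircled{W}}=\bigl(A^{\tiny\textcircled{\dag}}\bigr)^2A$ is a nice supplement but not needed.
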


\bigskip

It is known that
the Drazin inverse
 is  one  generalization of the group inverse.
We will see
 the similarities  and differences
 between the Drazin inverse and the WG inverse
 from
 the following  corollaries.

\begin{corollary}
\label{Remark-3-1}
Let $A\in \mathbb{C}_{n,n}$   be with ${\rm Ind}(A)=k$.
Then
 $$
 {\rm rk}\left(A^{\tiny\textcircled{W}}\right)
 =
 {\rm rk}\left(A^D\right)
 =
 {\rm rk}\left(A^k\right).
 $$
\end{corollary}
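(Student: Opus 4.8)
The plan is to read off all three ranks from the core-EP decomposition, which has already done the heavy lifting. By Lemma~\ref{core-EP-Decomposition} and~(\ref{2-4}) we may fix a unitary $U$ with
\[
A = U\left[ {{\begin{matrix} T & S \\ 0 & N \end{matrix} }} \right]U^\ast,
\]
$T$ non-singular of some size $r\times r$ and $N$ nilpotent.

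First I would compute ${\rm rk}(A^k)$. Raising the block form to the $k$-th power and using $N^k=0$ (from Lemma~\ref{core-EP-Decomposition}(ii), since $A_2=U\,{\rm diag}(0,N)\,U^\ast$ and $A_2^k=0$) gives, as in~(\ref{3-3}),
\[
A^k = U\left[ {{\begin{matrix} T^k & \Phi \\ 0 & 0 \end{matrix} }} \right]U^\ast,
\]
whose rank is exactly $r$ because $T^k$ is invertible (the $r$ rows $[\,T^k\ \Phi\,]$ are linearly independent, their span being all of $\mathbb{C}^r\times\{0\}$-independent directions, and the bottom block is zero). Hence ${\rm rk}(A^k)=r$.

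Next, ${\rm rk}(A^{\tiny\textcircled{W}})$: by Theorem~\ref{Remark-3-2} we have the explicit form~(\ref{3-8}),
\[
A^{\tiny\textcircled{W}} = U\left[ {{\begin{matrix} T^{-1} & T^{-2}S \\ 0 & 0 \end{matrix} }} \right]U^\ast,
\]
and the same argument (invertible top-left block $T^{-1}$, zero bottom block) yields rank $r$. Finally, for ${\rm rk}(A^D)$ I would either invoke the core-nilpotent decomposition $A=\widehat A_1+\widehat A_2$ with $A^D=\widehat A_1^\#$ and ${\rm rk}(\widehat A_1)={\rm rk}(\widehat A_1^\#)$ together with ${\rm rk}(A^D)={\rm rk}(A^k)$ (a standard identity, e.g.\ via $A^D=A^k(A^{k+1})^\#$ from~(\ref{2-7}), whence ${\rm rk}(A^D)\le{\rm rk}(A^k)$, and $A^k=A^{k+1}A^D\cdots$ reversing the inequality), or simply note $A^D A^k$ and $A^k A^D$ relate the two ranks: from~(\ref{2-7}), $A^D=A^k(A^{k+1})^\#$ gives ${\rm rk}(A^D)\le{\rm rk}(A^k)$, while $A^D A^{k+1}=A^k$ gives ${\rm rk}(A^k)\le{\rm rk}(A^D)$. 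Combining the three computations gives the claimed chain of equalities.

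The only mildly delicate point is justifying ${\rm rk}(A^D)={\rm rk}(A^k)$ cleanly without circularity; everything else is an immediate consequence of the block-triangular forms already established, since a block matrix $\left[{\begin{smallmatrix} M & \ast \\ 0 & 0 \end{smallmatrix}}\right]$ with $M$ invertible of size $r$ has rank $r$. I expect no real obstacle here — this corollary is essentially a bookkeeping consequence of Theorem~\ref{Remark-3-2} and~(\ref{3-3}).
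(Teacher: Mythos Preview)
Your proposal is correct and matches the paper's approach: the paper states the corollary without proof, treating it as an immediate consequence of the block forms~(\ref{3-8}) and~(\ref{3-3}) already on the page, which is exactly what you do. Your extra care in justifying ${\rm rk}(A^D)={\rm rk}(A^k)$ via~(\ref{2-7}) and $(1^k)$ is a welcome addition that the paper leaves entirely implicit.
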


It is well known that
 $\left(A^2\right)^{D}
=  \left(A^{D}\right)^2$,
but
the same is not true for the WG inverse.
Applying the core-EP decomposition (\ref{2-4}) of  $A$,
we have
\begin{align}
\label{3-9}
A^2
=
U\left[ {{\begin{matrix}
 T^2    & TS +SN  \\
 0      & N^2   \\
\end{matrix} }} \right] U^\ast
\end{align}
and
\begin{align}
\label{3-10}
\left(A^2\right)^{\tiny\textcircled{W}}
=
U\left[ {{\begin{matrix}
 T^{-2}    &  T^{-4}\left(TS +SN\right)  \\
 0         & 0   \\
\end{matrix} }} \right] U^\ast,\ \
\left(A^{\tiny\textcircled{W}}\right)^2
=
 U\left[ {{\begin{matrix}
T^{-2}    &T^{-3}S    \\
0         & 0  \\
\end{matrix} }} \right]U^\ast.
\end{align}
Therefore,
$
\left(A^2\right)^{\tiny\textcircled{W}}
=
\left(A^{\tiny\textcircled{W}}\right)^2 $
if and only if
$
T^{-4}\left(TS +SN\right)
=
T^{-3}S $.
Since
$T$ is invertible,
we derive the following Corollary \ref{Remark-3-5}.
\begin{corollary}
\label{Remark-3-5}
Let
 the core-EP decomposition of $A\in \mathbb{C}_{n,n}$
 be as in (\ref{2-4}).
Then
$\left(A^2\right)^{\tiny\textcircled{W}}
=
\left(A^{\tiny\textcircled{W}}\right)^2$
if and only if
$SN=0$.
\end{corollary}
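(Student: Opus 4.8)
The plan is to reduce the biconditional to the matrix identity already isolated in the text, namely $T^{-4}(TS+SN)=T^{-3}S$, and then simplify that identity using the invertibility of $T$. First I would recall from (\ref{3-9}) that applying the core-EP decomposition (\ref{2-4}) to $A$ gives the block form of $A^2$, with nonsingular diagonal block $T^2$ and nilpotent block $N^2$; this is itself a core-EP decomposition of $A^2$ (one checks $T^2$ nonsingular and $N^2$ nilpotent, so the required index-one plus nilpotent structure holds). Consequently, Theorem \ref{Remach later} — more precisely Theorem \ref{Remark-3-2}, which expresses the WG inverse of a matrix directly from its core-EP decomposition — applies to $A^2$ and yields the first formula in (\ref{3-10}); squaring (\ref{3-8}) gives the second formula in (\ref{3-10}).

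Next I would compare the two block matrices entrywise. Both have the same $(1,1)$ block $T^{-2}$ and the same (zero) bottom row, so equality $\left(A^2\right)^{\tiny\textcircled{W}}=\left(A^{\tiny\textcircled{W}}\right)^2$ holds if and only if the $(1,2)$ blocks agree, i.e. $T^{-4}(TS+SN)=T^{-3}S$. Multiplying both sides on the left by the invertible matrix $T^{4}$ turns this into $TS+SN=TS$, hence into $SN=0$. Since all these steps are reversible — left-multiplication by an invertible matrix is a bijection on matrices of the appropriate size — we get the stated equivalence.

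The only point that requires a word of care is justifying that (\ref{3-9}) genuinely is a core-EP decomposition of $A^2$, so that Theorem \ref{Remark-3-2} is legitimately applicable to produce (\ref{3-10}); this is routine since $U$ is still unitary, $T^2$ is nonsingular because $T$ is, and $N^2$ is nilpotent because $N$ is, but it should be stated. Everything after that is the short linear-algebra manipulation above, so I do not anticipate a genuine obstacle; the "hard part," such as it is, is merely bookkeeping with the $2\times 2$ block structure and noticing that cancelling $T^{-3}S$ from $T^{-4}SN = T^{-4}TS + T^{-4}SN - T^{-3}S + T^{-3}S$ isolates $SN=0$ cleanly once one multiplies through by $T^4$.
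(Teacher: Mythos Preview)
Your proposal is correct and follows the same approach as the paper: compute the block form of $A^2$ from the core-EP decomposition, read off $(A^2)^{\tiny\textcircled{W}}$ and $(A^{\tiny\textcircled{W}})^2$ via (\ref{3-10}), compare the $(1,2)$ blocks, and use the invertibility of $T$ to reduce $T^{-4}(TS+SN)=T^{-3}S$ to $SN=0$. The extra remark that (\ref{3-9}) is itself a valid core-EP form for $A^2$ (so Theorem \ref{Remark-3-2} applies) is a point the paper leaves implicit, so including it is fine.
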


The  commutativity is one of the main characteristics of the group inverse.
The Drazin inverse has the characteristic, too.
It is of interest to inquire whether
the same is  true  or not for the WG inverse.
Applying the core-EP decomposition (\ref{2-4}) of  $A$,
we have
\begin{subequations}
\begin{align}
\label{3-13}
A A^{\tiny\textcircled{W}}
&
=
U\left[ {{\begin{matrix}
 T    & S   \\
 0   & N   \\
\end{matrix} }} \right]\left[ {{\begin{matrix}
 T^{-1}    & T^{-2}S  \\
 0   & 0   \\
\end{matrix} }} \right]U^\ast
 =
 U\left[ {{\begin{matrix}
 I  & T^{-1}S  \\
 0   & 0  \\
\end{matrix} }} \right]U^\ast;
\\
\label{3-14}
 A^{\tiny\textcircled{W}}  A
 &
=
U\left[ {{\begin{matrix}
 T^{-1}    & T^{-2}S  \\
 0   & 0   \\
\end{matrix} }} \right]
\left[ {{\begin{matrix}
 T    & S   \\
 0   & N   \\
\end{matrix} }} \right]U^\ast
 =
 U\left[ {{\begin{matrix}
 I & T^{-1}S +T^{-2}SN \\
 0   & 0  \\
\end{matrix} }} \right]U^\ast.
 \end{align} \end{subequations}
Therefore,
we have the following Corollary \ref{Remark-3-6}.
\begin{corollary}
\label{Remark-3-6}
 Let
 the core-EP decomposition of $A\in \mathbb{C}_{n,n}$
 be as in (\ref{2-4}).
Then
$A A^{\tiny\textcircled{W}}  = A^{\tiny\textcircled{W}}  A$
if and only if
$SN=0$.
\end{corollary}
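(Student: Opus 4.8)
The plan is to compute both sides $AA^{\tiny\textcircled{W}}$ and $A^{\tiny\textcircled{W}}A$ explicitly using the core-EP decomposition (\ref{2-4}) together with the formula (\ref{3-8}) for $A^{\tiny\textcircled{W}}$, and then compare the resulting block matrices entry by entry. Since $U$ is unitary, the matrix identity $AA^{\tiny\textcircled{W}} = A^{\tiny\textcircled{W}}A$ holds if and only if the corresponding $2\times 2$ block matrices conjugated by $U$ are equal, so it suffices to work with the blocks.

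First I would substitute $A = U\left[\begin{matrix} T & S \\ 0 & N \end{matrix}\right]U^\ast$ and $A^{\tiny\textcircled{W}} = U\left[\begin{matrix} T^{-1} & T^{-2}S \\ 0 & 0 \end{matrix}\right]U^\ast$ and carry out the two block multiplications. As displayed in (\ref{3-13}) and (\ref{3-14}), this yields
\begin{align*}
AA^{\tiny\textcircled{W}} = U\left[\begin{matrix} I & T^{-1}S \\ 0 & 0 \end{matrix}\right]U^\ast, \qquad A^{\tiny\textcircled{W}}A = U\left[\begin{matrix} I & T^{-1}S + T^{-2}SN \\ 0 & 0 \end{matrix}\right]U^\ast.
\end{align*}
All blocks agree except the $(1,2)$ block, so the two products are equal precisely when $T^{-1}S = T^{-1}S + T^{-2}SN$, i.e. when $T^{-2}SN = 0$.

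Finally I would use that $T$ is non-singular (from Lemma \ref{Theorem-2-2}): multiplying $T^{-2}SN = 0$ on the left by $T^2$ shows this is equivalent to $SN = 0$. This establishes the claimed equivalence. The argument is essentially a routine block computation, so there is no real obstacle; the only point requiring a little care is the cancellation step and the invocation of the invertibility of $T$ to pass between $T^{-2}SN = 0$ and $SN = 0$, but since the relevant block identities have already been recorded in (\ref{3-13}) and (\ref{3-14}) in the text, the proof reduces to reading off the $(1,2)$ blocks and cancelling.
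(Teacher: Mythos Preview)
Your proposal is correct and follows essentially the same approach as the paper: the paper derives the block forms (\ref{3-13}) and (\ref{3-14}) and then immediately states the corollary, leaving the comparison of the $(1,2)$ blocks and the use of the invertibility of $T$ implicit, whereas you spell these steps out.
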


\bigskip

Let $SN=0$,
then by applying Corollary \ref{Remark-3-5}
and
Corollary \ref{Remark-3-6},
we derive
\begin{align}
\nonumber
A^2
&
=
U\left[ {{\begin{matrix}
 T^2    & TS   \\
 0      & N^2   \\
\end{matrix} }} \right] U^\ast,
...
,
A^{k }
=
U\left[ {{\begin{matrix}
 T^{k }    & T^{k-1 }S   \\
 0      & 0   \\
\end{matrix} }} \right] U^\ast, \
A^{k+1}
=
U\left[ {{\begin{matrix}
 T^{k+1}    & T^{k }S   \\
 0      & 0   \\
\end{matrix} }} \right] U^\ast.
\end{align}
Let $t$ be a positive integer. It follows from applying
 (\ref{2-7}),
 (\ref{2-6})
 and
 (\ref{2-5})
 that
\begin{align}
\nonumber
\left(A^{t+1}\right)^{\tiny\textcircled{\dag}}
&
=
U
\left[ {{\begin{matrix}
 T^{-(t+1)}    &0   \\
 0      & 0   \\
\end{matrix} }} \right]
U^\ast,
\\
\nonumber
\left(A ^{k+1}\right)^{\#}
&
=
\left(A^{k+1}\right)^{\tiny\textcircled{\#}}
=
U
\left[ {{\begin{matrix}
 T^{-(k+1)}    &T^{-(k+2)}S   \\
 0      & 0   \\
\end{matrix} }} \right]
U^\ast,
\\
\nonumber
A^D
&
=
\left(A^{k+1}\right)^{\#}A^{k }
=
U
\left[ {{\begin{matrix}
 T^{-(k+1)}    &T^{-(k+2)}S   \\
 0      & 0   \\
\end{matrix} }} \right]\left[ {{\begin{matrix}
 T^{k }    & T^{k-1 }S   \\
 0      & 0   \\
\end{matrix} }} \right]
U^\ast
\\
\nonumber
&
=
U
\left[ {{\begin{matrix}
 T^{-1}    &T^{-2}S   \\
 0      & 0   \\
\end{matrix} }} \right]
U^\ast
=
A^{\tiny\textcircled{W}}.
\end{align}

Therefore, we have the following Corollary \ref{Remark-3-10}..
\begin{corollary}
\label{Remark-3-10}
Let $A\in \mathbb{C}_{n,n}$   be with ${\rm Ind}(A)=k$,
 the core-EP decomposition of $A$  be as in (\ref{2-4})
and $SN=0$.
Then
$$
A^{\tiny\textcircled{W}}
=
A ^{D}
=
\left(A^{k+1}\right)^{\tiny\textcircled{\#}}A^k
=
\left(A^{t+1}\right)^{\tiny\textcircled{\dag}}A^t,$$
where $t$ is a  positive integer.
\end{corollary}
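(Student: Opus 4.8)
The plan is to read off all five objects directly from the core-EP decomposition (\ref{2-4}) of $A$, that is $A = U\left[\begin{matrix} T & S \\ 0 & N\end{matrix}\right]U^\ast$ with $T$ non-singular and $N$ nilpotent (so $N^{k}=0$), using the extra hypothesis $SN=0$. First I would record the powers of $A$: a one-step induction, with inductive step $A^{j+1}=A^{j}A$ and $T^{j-1}SN=0$, gives
\[
A^{j} = U\left[\begin{matrix} T^{j} & T^{j-1}S \\ 0 & N^{j}\end{matrix}\right]U^\ast \qquad (j\ge 1).
\]
In particular $A^{k} = U\left[\begin{matrix} T^{k} & T^{k-1}S \\ 0 & 0\end{matrix}\right]U^\ast$, $A^{k+1} = U\left[\begin{matrix} T^{k+1} & T^{k}S \\ 0 & 0\end{matrix}\right]U^\ast$, and $A^{t+1} = U\left[\begin{matrix} T^{t+1} & T^{t}S \\ 0 & N^{t+1}\end{matrix}\right]U^\ast$ for every positive integer $t$.

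Next I would identify the decomposition of each power that makes the closed forms of Section \ref{Preliminaries} applicable. Splitting $A^{t+1}=B_{1}+B_{2}$ with $B_{1}=U\left[\begin{matrix} T^{t+1} & T^{t}S \\ 0 & 0\end{matrix}\right]U^\ast$ and $B_{2}=U\left[\begin{matrix} 0 & 0 \\ 0 & N^{t+1}\end{matrix}\right]U^\ast$, one checks that $B_{1}\in\mathbb{C}^{\mbox{\rm\footnotesize\texttt{CM}}}_{n}$ (its leading block $T^{t+1}$ is non-singular), that $B_{2}$ is nilpotent, and that $B_{1}^{\ast}B_{2}=B_{2}B_{1}=0$; so this is the core-EP decomposition of $A^{t+1}$, and (\ref{2-5}) gives $\left(A^{t+1}\right)^{\tiny\textcircled{\dag}} = U\left[\begin{matrix} T^{-(t+1)} & 0 \\ 0 & 0\end{matrix}\right]U^\ast$. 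When $t\ge k$ we have $B_{2}=0$, hence $A^{t+1}$ is core invertible, and applying (\ref{2-2}) and (\ref{2-6}) to the block form of $A^{k+1}$ yields $\left(A^{k+1}\right)^{\tiny\textcircled{\#}} = U\left[\begin{matrix} T^{-(k+1)} & 0 \\ 0 & 0\end{matrix}\right]U^\ast$ and $\left(A^{k+1}\right)^{\#} = U\left[\begin{matrix} T^{-(k+1)} & T^{-(k+2)}S \\ 0 & 0\end{matrix}\right]U^\ast$.

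Finally I would assemble the four identities by block multiplication and compare each with the formula $A^{\tiny\textcircled{W}} = U\left[\begin{matrix} T^{-1} & T^{-2}S \\ 0 & 0\end{matrix}\right]U^\ast$ of Theorem \ref{Remark-3-2}. By (\ref{2-7}), $A^{D}=A^{k}\left(A^{k+1}\right)^{\#}$; multiplying the block forms and using that the second block row of $A^{k}$ vanishes collapses this to $U\left[\begin{matrix} T^{-1} & T^{-2}S \\ 0 & 0\end{matrix}\right]U^\ast$, i.e. $A^{D}=A^{\tiny\textcircled{W}}$. The product $\left(A^{k+1}\right)^{\tiny\textcircled{\#}}A^{k}$ reduces identically (the zero $(1,2)$-block of the core inverse is harmless because $A^{k}$ has zero second block row), and so does $\left(A^{t+1}\right)^{\tiny\textcircled{\dag}}A^{t}$: the zero second block row and column of $\left(A^{t+1}\right)^{\tiny\textcircled{\dag}}$ annihilate the $N^{t}$-block of $A^{t}$, so this identity holds for every positive integer $t$, even when $t<k$. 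Each product equals $U\left[\begin{matrix} T^{-1} & T^{-2}S \\ 0 & 0\end{matrix}\right]U^\ast = A^{\tiny\textcircled{W}}$, which finishes the proof.

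The computations are routine block arithmetic; the one step that genuinely needs care is the second, since the closed forms (\ref{2-2}), (\ref{2-5}), (\ref{2-6}) are only licensed for matrices already in the relevant normal form, so one must verify that the block splittings above really constitute the core-EP decomposition of $A^{t+1}$ (resp. exhibit $A^{k+1}$ as core invertible). The orthogonality relations $B_{1}^{\ast}B_{2}=B_{2}B_{1}=0$ that make this legitimate are precisely where the hypothesis $SN=0$ enters, via the induction producing the clean form of the powers.
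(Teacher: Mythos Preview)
Your proof is correct and follows essentially the same route as the paper: compute the powers $A^{j}$ under the hypothesis $SN=0$, read off the core-EP, core, and group inverses of $A^{k+1}$ and $A^{t+1}$ from the resulting block forms, and multiply out. You are in fact more careful than the paper in checking that the block splitting of $A^{t+1}$ really is its core-EP decomposition before invoking (\ref{2-5}); one small expository correction, though, is that the orthogonality relations $B_{1}^{\ast}B_{2}=B_{2}B_{1}=0$ follow automatically from the block-triangular shape regardless of $SN=0$ --- the hypothesis $SN=0$ is genuinely needed only to force the $(1,2)$-block of $A^{j}$ to be $T^{j-1}S$, which is what makes the final products collapse to $U\left[\begin{smallmatrix} T^{-1} & T^{-2}S \\ 0 & 0\end{smallmatrix}\right]U^{\ast}$.
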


\section{Two Orders} 
\label{Sec-new-PO}

A binary operation
on a set $S$ is said to be a  \emph{pre-order} on $S$
 if
 it is
reflexive
and
 transitive.
 If the pre-order is also anti-symmetric,
 we call it a \emph{partial order}
\cite[Chap 1]{Mitra2010book}.
 Let
 $S_1$ and $S_2$ be sets,
 and
 $S_2\subseteq S_1$,
then  a  partial order $\mathop \leq \limits^ { { {1}}}$
  on   $S_1$
 is said to be \emph{implied} by
 a  partial order $\mathop \leq \limits^ { { {2}}}$
  on  $S_2$
 if for  $A,B\in S_2$,
$$A \mathop \leq \limits^ { { {2}}} B
\Rightarrow
A \mathop \leq \limits^ { { {1}}} B.$$
 The expression
 $ A \mathop \nleq \limits^ { { {2}}} B$
 means
 that
 $A$ is not below  $B$ under
 the partial order $\mathop \leq \limits^ { { {2}}}$.

In \cite[Definition 4.4.1 \mbox{and} Definition 4.4.17]{Mitra2010book},
the definitions  of the \emph{Drazin order} and the \emph{C-N partial order} are given:
  \begin{align}
\label{5-12}
 A \mathop \le \limits^ {D}B
 &
:
 A, B \in {\mathbb{C} }_{n,n},
 \widehat{A}_1 \mathop \le \limits^ {\#} \widehat{B}_1  ,
\\
\label{5-13}
 A \mathop \le \limits^ {\#, -}B
 &
   :
 A, B \in {\mathbb{C} }_{n,n},
 \widehat{A}_1 \mathop \le \limits^ {\#} \widehat{B}_1
{\mbox{\ \rm and\ }}
 \widehat{A}_2  \mathop \le \limits^ -   {\widehat{B}_2}  ,
\end{align}
  in which $  A = \widehat{A}_1 + \widehat{A}_2$
  and
$B = \widehat{B}_1 + \widehat{B}_2$
 are the core-nilpotent decompositions of  $A$    and $ B$,
 respectively.
 Similarly,
 in this section,
 we apply the core-EP decomposition
 to introduce  two orders:
one is the WG order and the other is the C-E order.

\subsection{WG  order}
Consider
  the  binary operation:
  \begin{align}
\label{5-10}
 A \mathop \le \limits^ {\mbox{\tiny{\rm WG}}}B
 &
:
 A,B \in {\mathbb{C} }_{n,n} ,\
 A_1 \mathop \le \limits^ {\#} B_1  ,
\end{align}
  in which
  $A = A_1 + A_2$
  and
  $B = B_1 + B_2$
 are the core-EP decompositions of   $A$    and  $ B$, respectively.

Reflexivity of the relation is obvious.
Suppose
   $A \mathop \le \limits^{\mbox{\tiny\rm{WG}}} B $
   and
   $B \mathop \le \limits^{\mbox{\tiny\rm{WG}}} C $,
in which
   $A = A_1 + A_2 $,
   $B = B_1 + B_2 $
   and
   $C = C_1 + C_2 $
are the core-EP decompositions of $A$,  $B$
and
$C$, respectively.
Then
$A_1 \mathop \le \limits^ {\#} B_1 $
and
$B_1 \mathop \le \limits^ {\#} C_1 $.
Therefore
$A_1 \mathop \le \limits^ {\#} C_1 $.
It follows from (\ref{5-10}) that
   $A \mathop \le \limits^{\mbox{\tiny\rm{WG}}} C$.
\begin{example}
\label{Ex.5-1}
Let
$$
A=\left[
          \begin{array}{ccc}
            1 & 1 & 1 \\
            0 & 0 & 1 \\
            0 & 0 & 0\\
          \end{array}
\right],
\
B=\left[
          \begin{array}{ccc}
            1 & 1 & 1 \\
            0 & 0 & 2 \\
            0 & 0 & 0\\
          \end{array}
\right]
$$
Although
$ A \mathop \le \limits^ {\mbox{\tiny{\rm WG}}}B$
and
$ B \mathop \le \limits^ {\mbox{\tiny{\rm WG}}}A$,
$A\neq B$.
Therefore,
the  anti-symmetry
of the  binary operation (\ref{5-10})
 cannot be tenable.
\end{example}

Therefore,
we have the following Theorem \ref{Def-5-1}.
\begin{theorem}
\label{Def-5-1}
The   binary operation (\ref{5-10})
is a pre-order.
We call this pre-order the WG order.
\end{theorem}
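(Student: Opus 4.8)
The plan is to verify the two defining properties of a pre-order for the binary operation (\ref{5-10}), namely \emph{reflexivity} and \emph{transitivity}, exactly as laid out in the discussion preceding the statement; since anti-symmetry has already been shown to fail via Example \ref{Ex.5-1}, the conclusion that (\ref{5-10}) is a pre-order but not a partial order then follows immediately.

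First I would dispatch reflexivity. Given $A\in\mathbb{C}_{n,n}$ with core-EP decomposition $A=A_1+A_2$, the component $A_1$ lies in $\mathbb{C}^{\mbox{\rm\footnotesize\texttt{CM}}}_{n}$ by Lemma \ref{core-EP-Decomposition}(i), so $A_1$ is group invertible and $A_1\mathop\le\limits^{\#}A_1$ holds because the group order $\mathop\le\limits^{\#}$ is itself reflexive (it is a partial order on $\mathbb{C}^{\mbox{\rm\footnotesize\texttt{CM}}}_{n}$, see \cite[Chap 1]{Mitra2010book}). By the defining equivalence (\ref{5-10}), this gives $A\mathop\le\limits^{\mbox{\tiny\rm WG}}A$.

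Next I would handle transitivity. Suppose $A\mathop\le\limits^{\mbox{\tiny\rm WG}}B$ and $B\mathop\le\limits^{\mbox{\tiny\rm WG}}C$, with core-EP decompositions $A=A_1+A_2$, $B=B_1+B_2$, $C=C_1+C_2$. Unwinding (\ref{5-10}) twice yields $A_1\mathop\le\limits^{\#}B_1$ and $B_1\mathop\le\limits^{\#}C_1$; since $\mathop\le\limits^{\#}$ is transitive on $\mathbb{C}^{\mbox{\rm\footnotesize\texttt{CM}}}_{n}$, we get $A_1\mathop\le\limits^{\#}C_1$, and re-applying (\ref{5-10}) gives $A\mathop\le\limits^{\mbox{\tiny\rm WG}}C$. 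Combining the two parts, (\ref{5-10}) is reflexive and transitive, hence a pre-order; we name it the WG order.

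The only genuine subtlety — and what I would flag as the main point to get right rather than a true obstacle — is making sure the WG order is well defined as a relation: the core-EP decomposition $A=A_1+A_2$ of Lemma \ref{core-EP-Decomposition} is \emph{unique} (equivalently, $A_1$ is canonically determined by $A$, being the core-EP-invertible part appearing in (\ref{2-4})), so the truth value of $A_1\mathop\le\limits^{\#}B_1$ does not depend on any choices. Everything else reduces to citing that $\mathop\le\limits^{\#}$ is a partial order, so the proof is short; the substance of the result is really in the definition (\ref{5-10}) and in Example \ref{Ex.5-1}, which together show the relation is a pre-order that is strictly weaker than a partial order.
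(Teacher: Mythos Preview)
Your proposal is correct and follows essentially the same approach as the paper: the paper dispatches reflexivity as obvious, proves transitivity by inheriting it from the sharp order $\mathop\le\limits^{\#}$ on the $A_1$-components, and invokes Example \ref{Ex.5-1} to show anti-symmetry fails. Your added remark on well-definedness via uniqueness of the core-EP decomposition is a welcome clarification that the paper leaves implicit.
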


\begin{remark}
\label{Remark-5-2}
The WG order coincides with  the sharp partial order
on ${\mathbb{C}}_n^{\mbox{\rm\footnotesize\texttt{CM}}}$.
\end{remark}

In the following two examples,
we see some differences between the WG order and the Drazin order.

\begin{example}
\label{Ex.5-2}
Let
$$
A=\left[
          \begin{array}{ccc}
            1 & 1 & 1 \\
            0 & 0 & 1 \\
            0 & 0 & 0\\
          \end{array}
\right],
\
B=\left[
          \begin{array}{ccc}
            1 & 1 & 1 \\
            0 & 0 & 2 \\
            0 & 0 & 0\\
          \end{array}
\right],\
A^D=\left[
          \begin{array}{ccc}
            1 & 1 & 2 \\
            0 & 0 & 0 \\
            0 & 0 & 0\\
          \end{array}
\right].
$$
It is easy to check that
$ A \mathop \le \limits^ {\mbox{\tiny{\rm WG}}}B$.

Since
$A^DA\neq A^D B$,
we  derive $ A \mathop \nleq \limits^ { { {  D}}} B$.
Therefore,
the WG order
does not imply
the Drazin order.
\end{example}

\begin{example}
\label{Ex.5-3}
Let
\begin{align*}
\widehat{A}
&
= \left[
          \begin{array}{ccc}
            1 & 0 & 0 \\
            0 & 0 & 0 \\
            0 & 0 & 0\\
          \end{array}
\right],
\widehat{B}=\left[
          \begin{array}{ccc}
            1 & 0 & 0 \\
            0 & 0 & 1 \\
            0 & 0 & 0\\
          \end{array}
\right],
P=
\left[
          \begin{array}{ccc}
            1 & -2 & 0 \\
            0 & 1 & 0 \\
            0 & 0 & 1\\
          \end{array}
\right],
\\
A
&
=P\widehat{A}P^{-1}
=\left[
          \begin{array}{ccc}
            1 & 2 & 0 \\
            0 & 0 & 0 \\
            0 & 0 & 0\\
          \end{array}
\right],
B
=P\widehat{B}P^{-1}
=\left[
          \begin{array}{ccc}
            1 & 2 & -2 \\
            0 & 0 & 1 \\
            0 & 0 & 0\\
          \end{array}
\right],
\\
A_1
&
=
\left[
          \begin{array}{ccc}
            1 & 2 & 0 \\
            0 & 0 & 0 \\
            0 & 0 & 0\\
          \end{array}
\right],\
A_2=0,\
B_1
=\left[
          \begin{array}{ccc}
            1 & 2 & -2 \\
            0 & 0 & 0 \\
            0 & 0 & 0\\
          \end{array}
\right],\
B_2
=\left[
          \begin{array}{ccc}
            0 & 0 &0 \\
            0 & 0 & 1 \\
            0 & 0 & 0\\
          \end{array}
\right],
\end{align*}
in which
   $A = A_1 + A_2 $
   and
   $B = B_1 + B_2 $
are the core-EP decompositions of $A$ and $B$, respectively.
Then
$ A \mathop \leq \limits^ { { {  D}}} B$
and
$A_1 \mathop \nleq \limits^ { \# } B_1$.
Therefore,
the Drazin order
does not imply
the WG  order.
\end{example}

\bigskip

It is well known that
$A \mathop \leq \limits^ { { {  D}}} B
\Rightarrow
A^2 \mathop \leq \limits^ { { {  D}}} B^2$,
but the same is not true
for the WG order
as the following example shows:
\begin{example}
\label{Remark-5-4}
Let
$$
A=\left[
          \begin{array}{ccc}
            1 & 1 & 1 \\
            0 & 0 & 0 \\
            0 & 0 & 0\\
          \end{array}
\right],
\
B=\left[
          \begin{array}{ccc}
            1 & 1 & 1 \\
            0 & 0 & 2 \\
            0 & 0 & 0\\
          \end{array}
\right],\
A^2=\left[
          \begin{array}{ccc}
            1 & 1 & 1 \\
            0 & 0 & 0 \\
            0 & 0 & 0\\
          \end{array}
\right],\
B^2=\left[
          \begin{array}{ccc}
            1 & 1 & 3 \\
            0 & 0 & 0 \\
            0 & 0 & 0\\
          \end{array}
\right].
$$
We  derive
$ A^2 \mathop \nleq \limits^ {\mbox{\tiny{\rm WG}}}B^2 $.
Therefore,
$ A \mathop \leq \limits^ {\mbox{\tiny{\rm WG}}}B
\nRightarrow
A^2 \mathop \leq \limits^ {\mbox{\tiny{\rm WG}}}B^2$. 
\end{example}

Let
$ A \mathop \le \limits^ {\mbox{\tiny{\rm WG}}}B$,
$A = A_1 + A_2$
  and
  $B = B_1 + B_2$
 are the core-EP decompositions of   $A$    and  $ B$,
  $A_1$ and $A_2$  be as given in (\ref{2-4}),
 and
  partition 
 \begin{align}
\label{5-3}
U^\ast B_1 U
&
=
\left[ {{\begin{matrix}
 B_{11}    & B_{12}   \\
 B_{21}    & B_{22}   \\
\end{matrix} }} \right].
 \end{align}
Applying
(\ref{3-13})
and
(\ref{3-14})
gives
\begin{align}
\nonumber
A_1 A_1^\#
&
=
U\left[ {{\begin{matrix}
 T   &  S   \\
 0   &  0   \\
\end{matrix} }} \right]\left[ {{\begin{matrix}
 T^{-1}    & T^{-2}S  \\
 0         & 0   \\
\end{matrix} }} \right]U^\ast
 =
 U\left[ {{\begin{matrix}
 I  & T^{-1}S  \\
 0   & 0  \\
\end{matrix} }} \right]U^\ast;
\\
\nonumber
B_1 A_1^\#
&
=
U\left[ {{\begin{matrix}
 B_{11}    & B_{12}   \\
 B_{21}    & B_{22}   \\
\end{matrix} }} \right]\left[ {{\begin{matrix}
 T^{-1}    & T^{-2}S  \\
 0   & 0   \\
\end{matrix} }} \right]U^\ast
 =
 U\left[ {{\begin{matrix}
 B_{11}T^{-1}    &B_{11} T^{-2}S  \\
 B_{21}T^{-1}    &B_{21} T^{-2}S  \\
\end{matrix} }} \right]U^\ast.
\end{align}
Since
$A \mathop \leq \limits^ {\mbox{\tiny{\rm WG}}}B$,
$A_1 \mathop \leq \limits^ { \# }B_1$.
It follows
from
$ A_1A_1^\#
=
B_1 A_1^\#$
that
\begin{align}
\label{5-1}
B_{11}=T \mbox{\ and \ } B_{21}=0.
 \end{align}
By applying (\ref{5-3}) and (\ref{5-1}),
we have
\begin{align}
\nonumber
 A_1^\#  A_1
 &
=
 U\left[ {{\begin{matrix}
 I   & T^{-1}S   \\
 0   & 0  \\
\end{matrix} }} \right]U^\ast,
\\
\nonumber
 A_1^\#B_1
&
 =
 U\left[ {{\begin{matrix}
  I    &  T^{-1}B_{12} +T^{-2}SB_{22} \\
  0    & 0 \\
\end{matrix} }} \right]U^\ast.
 \end{align}
It follows
from
$ A_1^\#  A_1
=
A_1^\#B_1$
that
\begin{align}
\nonumber
0
&
=
T^{-1}S -\left(T^{-1}B_{12} +T^{-2}SB_{22}\right)\\
\nonumber
 &
 =
 T^{-1}\left( S -T^{-1}SB_{22}- B_{12} \right).
 \end{align}
Therefore,
\begin{align}
\label{5-2}
B_{12}= S -T^{-1}SB_{22},
 \end{align}
 in which
$B_{22}$ is an arbitrary matrix of an appropriate size. 
From (\ref{5-1}) and (\ref{5-2}),
we obtain
\begin{align}
\label{5-4}
B_1&
=
U\left[ {{\begin{matrix}
T   & S -T^{-1}SB_{22}  \\
0   & B_{22}   \\
\end{matrix} }} \right]U^\ast.
 \end{align}

Since $B_1$ is core invertible,
 $B_{22}$ is core invertible.
Let the core decomposition of $B_{22}$ be as
 \begin{align}
\label{5-5}
B_{22}&
=
U_1\left[ {{\begin{matrix}
T_1  & S_1  \\
0   & 0   \\
\end{matrix} }} \right]U_1^\ast,
 \end{align}
where $T_1$ is invertible.
Denote
$$\widehat{U}=U\left[ {{\begin{matrix}
I   & 0  \\
0   & U_1 \end{matrix}}}\right].$$
It is easy to see that
$\widehat{U}$
is a unitary matrix.
Let $SU_1$ be partitioned
as follows:
$$SU_1=\left[ {{\begin{matrix}
\widehat{S}_1  & \widehat{S}_2   \end{matrix}}}\right].$$
 Then
\begin{align}
\label{5-7}
A_1
=
&
\widehat{U}
\left[ {{\begin{matrix}
T   &    \widehat{S}_1   &  \widehat{S}_2   \\
0   &  0  & 0  \\
0   & 0  & 0   \\
\end{matrix} }} \right]\widehat{U}^\ast
 \end{align}
 and
\begin{align}
\nonumber
B_1
&
=
U\left[ {{\begin{matrix}
T   & S -T^{-1}SB_{22}  \\
0   & {U_1\left[ {{\begin{matrix}
T_1  & S_1  \\
0   & 0   \\
\end{matrix} }} \right]U_1^\ast }   \\
\end{matrix} }} \right]U^\ast
\\
\nonumber
&
=
U\left[ {{\begin{matrix}
I   & 0  \\
0   & U_1 \end{matrix}}}\right]
\left[ {{\begin{matrix}
T   & SU_1 -T^{-1}SU_1U_1^\ast B_{22}U_1  \\
0   & {\left[ {{\begin{matrix}
T_1  & S_1  \\
0   & 0   \\
\end{matrix} }} \right] }   \\
\end{matrix} }} \right]\left[ {{\begin{matrix}
I   & 0  \\
0   & U_1^\ast \end{matrix}}}\right]U^\ast
\\
\nonumber
&
=
\widehat{U}
\left[ {{\begin{matrix}
T   & \left[ {{\begin{matrix}
\widehat{S}_1  & \widehat{S}_2   \end{matrix}}}\right]
 -
 T^{-1}\left[ {{\begin{matrix}
\widehat{S}_1  & \widehat{S}_2   \end{matrix}}}\right]
{\left[ {{\begin{matrix}
T_1  & S_1  \\
0   & 0   \\
\end{matrix} }} \right] }  \\
0   & {\left[ {{\begin{matrix}
T_1  & S_1  \\
0   & 0   \\
\end{matrix} }} \right] }   \\
\end{matrix} }} \right]\widehat{U}^\ast
\\
\label{5-6}
&
=
\widehat{U}
\left[ {{\begin{matrix}
T   &    \widehat{S}_1 -T^{-1}\widehat{S}_1 T_1  &  \widehat{S}_2 - T^{-1}\widehat{S}_1 S_1 \\
0   &  T_1  & S_1  \\
0   & 0  & 0   \\
\end{matrix} }} \right]\widehat{U}^\ast.
 \end{align}
From (\ref{5-10}),
(\ref{5-7})
and
(\ref{5-6}),
we derive the following Theorem \ref{Theorem-5-1-1}.
\begin{theorem}
\label{Theorem-5-1-1}
Let  $A,B \in {\mathbb{C} }_{n,n}$.
Then
$
A \mathop \le \limits^{\mbox{\tiny{\rm WG}}} B$
if and only if
there exists a unitary matrix
$\widehat{U}$
such that
\begin{subequations}
\begin{align}
\label{5-8-a}
A
&
=
\widehat{U}
\left[ {{\begin{matrix}
T   &    \widehat{S}_1   &  \widehat{S}_2   \\
0   &  N_{11}  & N_{12}   \\
0   & N_{21}   & N_{22}   \\
\end{matrix} }} \right]\widehat{U}^\ast,
\\
\label{5-8-b}
B
&
=
\widehat{U}
\left[ {{\begin{matrix}
T   &    \widehat{S}_1 -T^{-1}\widehat{S}_1 T_1  &  \widehat{S}_2 - T^{-1}\widehat{S}_1 S_1 \\
0   &  T_1  & S_1  \\
0   & 0  & N_2   \\
\end{matrix} }} \right]\widehat{U}^\ast,
 \end{align}
 \end{subequations}
where
$T$ and $T_1$
are
invertible,
 $\left[ {{\begin{matrix}
N_{11}  & N_{12}   \\
  N_{21}   & N_{22}   \\
\end{matrix} }} \right]$
and
$N_2$ are nilpotent.
\end{theorem}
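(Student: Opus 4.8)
The plan is to prove the two implications of Theorem~\ref{Theorem-5-1-1} separately, building entirely on the computation already carried out in the excerpt. The necessity direction ($A \mathop{\le}\limits^{\mbox{\tiny\rm WG}} B \Rightarrow$ the stated form) is essentially done: starting from the core-EP decomposition (\ref{2-4}) of $A$ and the partition (\ref{5-3}) of $U^\ast B_1 U$, the relation $A_1 \mathop{\le}\limits^{\#} B_1$ forces $B_1 A_1^\# = A_1 A_1^\#$ and $A_1^\# B_1 = A_1^\# A_1$, which yield (\ref{5-1}) and (\ref{5-2}), hence the block form (\ref{5-4}) of $B_1$. Invoking core invertibility of $B_1$ to conclude core invertibility of $B_{22}$, taking its core decomposition (\ref{5-5}), and passing to the enlarged unitary $\widehat{U}$ produces exactly (\ref{5-7}) for $A_1$ and (\ref{5-6}) for $B_1$. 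Writing $A = A_1 + A_2$ and $B = B_1 + B_2$ with the nilpotent parts $A_2$, $B_2$ occupying the lower-right blocks relative to $\widehat{U}$ then gives (\ref{5-8-a})--(\ref{5-8-b}), with $\left[{\begin{matrix} N_{11} & N_{12} \\ N_{21} & N_{22} \end{matrix}}\right]$ and $N_2$ nilpotent because they are the $\widehat{U}$-coordinates of $A_2$ and $B_2$.

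For the sufficiency direction I would start from matrices $A$, $B$ presented as in (\ref{5-8-a})--(\ref{5-8-b}) and verify that these are genuinely their core-EP decompositions, i.e.\ that the displayed upper-triangular ``core'' blocks $A_1$, $B_1$ together with the nilpotent remainders satisfy conditions (i)--(iii) of Lemma~\ref{core-EP-Decomposition}. Concretely, $A_1 = \widehat{U}\,\mathrm{diag}\text{-type block}\,\widehat{U}^\ast$ has index one (its leading block is the invertible $T$ bordered by zero rows), $A_2 = \widehat{U}\left[{\begin{matrix} 0 & 0 \\ 0 & N_A \end{matrix}}\right]\widehat{U}^\ast$ with $N_A = \left[{\begin{matrix} N_{11} & N_{12} \\ N_{21} & N_{22} \end{matrix}}\right]$ nilpotent, and one checks $A_1^\ast A_2 = A_2 A_1 = 0$ directly from the block shapes; the same for $B$. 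By uniqueness of the core-EP decomposition (implicit in Lemma~\ref{core-EP-Decomposition} and used throughout Section~\ref{Sec-new-GI}), these are \emph{the} core-EP parts. Then $A \mathop{\le}\limits^{\mbox{\tiny\rm WG}} B$ reduces, by definition (\ref{5-10}), to checking $A_1 \mathop{\le}\limits^{\#} B_1$, which I would do by exhibiting $A_1 A_1^\# = B_1 A_1^\#$ and $A_1^\# A_1 = A_1^\# B_1$ (the standard characterization of the sharp order): compute $A_1^\#$ from (\ref{2-6}) applied to the $\widehat{U}$-block form, and multiply out, observing that the off-diagonal coupling in (\ref{5-8-b}) was engineered precisely so that the ``$S$-type'' entries match after multiplication by $T^{-1}$ and $T^{-2}$.

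The main obstacle, and the place where care is needed, is the bookkeeping of block sizes across the two changes of unitary basis: first $U$ (from the core-EP decomposition of $A$), then $U_1$ (from the core decomposition of the sub-block $B_{22}$), amalgamated into $\widehat{U}$. One must track how $S$ splits as $SU_1 = \left[{\begin{matrix} \widehat{S}_1 & \widehat{S}_2 \end{matrix}}\right]$ and how the single off-diagonal block of (\ref{5-4}) refines into the two blocks $\widehat{S}_1 - T^{-1}\widehat{S}_1 T_1$ and $\widehat{S}_2 - T^{-1}\widehat{S}_1 S_1$ of (\ref{5-6}); getting these identifications exactly right, and confirming that the nilpotent blocks remain nilpotent (rather than merely block-triangular) under the basis change, is the only non-mechanical point. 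Everything else is the routine matrix algebra already displayed in the lead-up to the theorem, so the proof is mostly a matter of assembling (\ref{5-1})--(\ref{5-7}) into the two stated block forms and reversing the steps.
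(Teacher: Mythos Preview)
Your proposal is correct and follows the same route as the paper: the necessity direction is exactly the block computation (\ref{5-1})--(\ref{5-6}) already laid out before the theorem, and the paper, like you, then reads off (\ref{5-8-a})--(\ref{5-8-b}) by adjoining the nilpotent parts. The paper treats the sufficiency direction as implicit, whereas you propose to verify $A_1\mathop\le\limits^{\#}B_1$ by direct multiplication with $A_1^\#$; that is the intended computation and it goes through. One small point you allude to but do not fully pin down: in the necessity direction, the fact that $B_2$ lands in the bottom-right $\widehat{U}$-block (so that $B$ has exactly the form (\ref{5-8-b})) follows from the core-EP conditions $B_1^\ast B_2=0$ and $B_2B_1=0$ together with the invertibility of $\left[\begin{smallmatrix}T & *\\ 0 & T_1\end{smallmatrix}\right]$---worth stating explicitly, since neither you nor the paper does.
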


\subsection{C-E partial order}
Consider
  the  binary operation:
\begin{align}
\label{5-11}
 A\mathop \le \limits^{\rm CE} B
:
 A,B \in {\mathbb{C} }_{n,n} ,
 A_1 \mathop \le \limits^ {\#} B_1
{\mbox{\ \rm and\ }}
 A_2 \mathop \le \limits^ {-} B_2  ,
\end{align}
  in which
  $A = A_1 + A_2$
  and
  $B = B_1 + B_2$
 are the core-EP decompositions of   $A$    and  $ B$, respectively.

\begin{definition}
\label{Def-5-2}
Let $A,B\in \mathbb{C}_{n,n}$.
We say that $A$ is  below $B$ under
 the C-E order
 if
$A$ and $B$ satisfy the binary operation (\ref{5-11}).

When $A$ is below $B$ under the  C-E order,
we write
   $A \mathop \le \limits^{{ \rm  CE}} B $.
\end{definition}

\begin{theorem}
\label{Theorem-5-2}
The C-E order  is a partial order.
\end{theorem}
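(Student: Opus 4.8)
The plan is to verify the three defining properties of a partial order---reflexivity, transitivity, and anti-symmetry---for the binary operation (\ref{5-11}). Reflexivity is immediate: for any $A$ with core-EP decomposition $A=A_1+A_2$, we have $A_1\mathop\le\limits^{\#}A_1$ (the sharp order is reflexive on $\mathbb{C}_n^{\mbox{\rm\footnotesize\texttt{CM}}}$) and $A_2\mathop\le\limits^{-}A_2$ (the minus order is reflexive), so $A\mathop\le\limits^{\rm CE}A$. Transitivity follows exactly as in the proof of Theorem \ref{Def-5-1}: if $A\mathop\le\limits^{\rm CE}B$ and $B\mathop\le\limits^{\rm CE}C$ with core-EP decompositions $A=A_1+A_2$, $B=B_1+B_2$, $C=C_1+C_2$, then $A_1\mathop\le\limits^{\#}B_1\mathop\le\limits^{\#}C_1$ and $A_2\mathop\le\limits^{-}B_2\mathop\le\limits^{-}C_2$; since both the sharp order and the minus order are themselves transitive, we get $A_1\mathop\le\limits^{\#}C_1$ and $A_2\mathop\le\limits^{-}C_2$, hence $A\mathop\le\limits^{\rm CE}C$.

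The substantive part is anti-symmetry, and here I would lean on Theorem \ref{Theorem-5-1-1}. Suppose $A\mathop\le\limits^{\rm CE}B$ and $B\mathop\le\limits^{\rm CE}A$. In particular $A\mathop\le\limits^{\mbox{\tiny{\rm WG}}}B$ and $B\mathop\le\limits^{\mbox{\tiny{\rm WG}}}A$, so $A_1\mathop\le\limits^{\#}B_1$ and $B_1\mathop\le\limits^{\#}A_1$; since the sharp order \emph{is} a partial order on $\mathbb{C}_n^{\mbox{\rm\footnotesize\texttt{CM}}}$, anti-symmetry there gives $A_1=B_1$. It then remains to show $A_2=B_2$. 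From $A_2\mathop\le\limits^{-}B_2$ and $B_2\mathop\le\limits^{-}A_2$ one is tempted to invoke anti-symmetry of the minus order directly---but the minus order is a partial order only on the \emph{whole} matrix set, and $A_2,B_2$ are arbitrary nilpotent matrices, so in fact $A_2\mathop\le\limits^{-}B_2$ and $B_2\mathop\le\limits^{-}A_2$ already force $A_2=B_2$ by the standard anti-symmetry of the minus partial order (Hartwig--Mitra). Thus $A=A_1+A_2=B_1+B_2=B$.

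The one technical point to be careful about---and the step I expect to be the main obstacle---is making sure the minus-order comparison in (\ref{5-11}) is genuinely the minus partial order on all of $\mathbb{C}_{n,n}$ and not some restricted relation, so that its known anti-symmetry applies to the nilpotent parts $A_2,B_2$. Once that is granted, the argument is a clean two-line reduction: $A_1=B_1$ from anti-symmetry of $\mathop\le\limits^{\#}$, and $A_2=B_2$ from anti-symmetry of $\mathop\le\limits^{-}$, whence $A=B$. I would also note in passing, as a sanity check, that the pair of matrices in Example \ref{Ex.5-1} satisfies $A\mathop\le\limits^{\mbox{\tiny{\rm WG}}}B$ and $B\mathop\le\limits^{\mbox{\tiny{\rm WG}}}A$ but \emph{not} $A\mathop\le\limits^{\rm CE}B$, since their nilpotent parts are not minus-comparable---this is precisely why adjoining the condition $A_2\mathop\le\limits^{-}B_2$ upgrades the pre-order to a partial order.
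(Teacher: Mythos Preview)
Your proposal is correct and follows essentially the same approach as the paper's proof: reflexivity is immediate, transitivity reduces to transitivity of $\mathop\le\limits^{\#}$ and $\mathop\le\limits^{-}$ on the components, and anti-symmetry reduces to anti-symmetry of those two partial orders to give $A_1=B_1$ and $A_2=B_2$. The reference to Theorem~\ref{Theorem-5-1-1} is unnecessary (you do not actually use it in your argument), and the paper's proof does not invoke it either; your concern about the minus order is also moot, since $\mathop\le\limits^{-}$ is the ordinary minus partial order on all of $\mathbb{C}_{n,n}$ and the nilpotent parts live there.
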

\begin{proof}
Reflexivity is trivial.

Let
   $A \mathop \le \limits^{{\rm CE}} B $£»
   $B \mathop \le \limits^{{\rm CE}} C $
and
   $A = A_1 + A_2 $,
   $B = B_1 + B_2 $
   and
   $C = C_1 + C_2 $
are the core-EP decompositions of  $A$,  $B$
and
$C$, respectively.
Then
$A_1 \mathop \le \limits^ {\#} B_1 $,
$B_1 \mathop \le \limits^ {\#} C_1 $
and
$A_2  \mathop \le \limits^ -   {B_2} $,
$B_2  \mathop \le \limits^ -  {C_2} $.
Therefore
$A_1 \mathop \le \limits^ {\#} C_1 $
and
$A_2  \mathop \le \limits^ -   {C_2} $.
It follows
from  Definition \ref{Def-5-2}
that
   $A \mathop \le \limits^{{\rm CE}} C$.

If
   $A \mathop \le \limits^{{\rm CE}} B $
and
   $B \mathop \le \limits^{{\rm CE}} A $,
Then
$A_1 = B_1 $
and
$A_2 =  {B_2} $,
that is,
$A=B$.
\end{proof}

\begin{theorem}
\label{Theorem-5-1}
Let  $A,B \in {\mathbb{C} }_{n,n}$.
Then
$
A \mathop \le \limits^{{\rm CE}} B $
if and only if
there exists
a unitary matrix
$U$
satisfying
\begin{subequations}
\begin{align}
\label{5-9-a}
A
&
=
\widehat{U}
\left[ {{\begin{matrix}
T   &    \widehat{S}_1   &  \widehat{S}_2   \\
0   & 0  & 0   \\
0   & 0  & N_{22}   \\
\end{matrix} }} \right]\widehat{U}^\ast,
\\
\label{5-9-b}
B
&
=
\widehat{U}
\left[ {{\begin{matrix}
T   &    \widehat{S}_1 -T^{-1}\widehat{S}_1 T_1  &  \widehat{S}_2 - T^{-1}\widehat{S}_1 S_1 \\
0   &  T_1  & S_1  \\
0   & 0  & N_2   \\
\end{matrix} }} \right]\widehat{U}^\ast,
 \end{align}
 \end{subequations}
where
  $T$ and $T_1$ are invertible,
  $ N_{22}$ and $N_2$ are nilpotent,
and
$N_{22} \mathop \le \limits^ - N_{2}$.
\end{theorem}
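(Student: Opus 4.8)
The plan is to reduce the statement to Theorem~\ref{Theorem-5-1-1} (the WG-order characterization) together with the extra requirement on the nilpotent blocks coming from the relation $A_2 \mathop \le \limits^{-} B_2$. Recall that by Definition~\ref{Def-5-2}, $A \mathop \le \limits^{{\rm CE}} B$ means precisely $A_1 \mathop \le \limits^{\#} B_1$ \emph{and} $A_2 \mathop \le \limits^{-} B_2$, where $A=A_1+A_2$, $B=B_1+B_2$ are the core-EP decompositions. The first conjunct $A_1 \mathop \le \limits^{\#} B_1$ is exactly $A \mathop \le \limits^{\mbox{\tiny{\rm WG}}} B$, so Theorem~\ref{Theorem-5-1-1} already gives a unitary $\widehat{U}$ and invertible $T,T_1$ with $A$ and $B_1$ in the forms~(\ref{5-8-a}) and~(\ref{5-8-b}). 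What remains is to pin down the $(3,3)$ block $N_{22}$ of $A$ and to translate $A_2 \mathop \le \limits^{-} B_2$ into the condition $N_{22} \mathop \le \limits^{-} N_2$.

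First I would establish the forward direction. Given $A \mathop \le \limits^{{\rm CE}} B$, apply Theorem~\ref{Theorem-5-1-1} to the WG part to obtain $\widehat{U}$, (\ref{5-8-a}) and (\ref{5-8-b}). Now read off the core-EP decompositions in this basis: the core-EP part $A_1$ must equal the ``invertible-leading'' summand, which from the derivation preceding Theorem~\ref{Theorem-5-1-1} (see (\ref{5-7})) is $\widehat{U}\,\mathrm{diag}$-type block $\bigl[\begin{smallmatrix} T & \widehat{S}_1 & \widehat{S}_2 \\ 0 & 0 & 0 \\ 0 & 0 & 0 \end{smallmatrix}\bigr]\widehat{U}^\ast$, and hence the nilpotent part is $A_2 = \widehat{U}\bigl[\begin{smallmatrix} 0 & 0 & 0 \\ 0 & N_{11} & N_{12} \\ 0 & N_{21} & N_{22} \end{smallmatrix}\bigr]\widehat{U}^\ast$ with $\bigl[\begin{smallmatrix} N_{11} & N_{12} \\ N_{21} & N_{22}\end{smallmatrix}\bigr]$ nilpotent; similarly $B_2 = \widehat{U}\bigl[\begin{smallmatrix} 0 & 0 & 0 \\ 0 & 0 & 0 \\ 0 & 0 & N_2 \end{smallmatrix}\bigr]\widehat{U}^\ast$. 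The relation $A_2 \mathop \le \limits^{-} B_2$ is unitarily invariant and compatible with taking orthogonal block sums, so it forces the $2\times 2$ block $\bigl[\begin{smallmatrix} N_{11} & N_{12} \\ N_{21} & N_{22}\end{smallmatrix}\bigr]$ of $A_2$ to be minus-below $\bigl[\begin{smallmatrix} 0 & 0 \\ 0 & N_2 \end{smallmatrix}\bigr]$; since the minus order $M \mathop \le \limits^{-} \bigl[\begin{smallmatrix}0&0\\0&N_2\end{smallmatrix}\bigr]$ with the target having a zero row and column block is equivalent to $N_{11}=0$, $N_{12}=0$, $N_{21}=0$ and $N_{22}\mathop \le \limits^{-} N_2$ (a rank-additivity / range-inclusion argument: $\mathcal{R}(M)\subseteq\mathcal{R}(\mathrm{target})$ and $\mathcal{R}(M^\ast)\subseteq\mathcal{R}(\mathrm{target}^\ast)$ kill the off-block parts), the blocks $N_{11},N_{12},N_{21}$ vanish and we land in the form~(\ref{5-9-a}) with the constraint $N_{22}\mathop \le \limits^{-} N_2$.

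For the converse I would simply verify directly that matrices of the form~(\ref{5-9-a}),~(\ref{5-9-b}) satisfy $A \mathop \le \limits^{{\rm CE}} B$: compute the core-EP decompositions of $A$ and $B$ in the given block form (the core-EP part picks out the first block-row/column structure with invertible leading block, exactly as in Lemma~\ref{core-EP-Decomposition} and (\ref{2-4})), confirm $A_1 \mathop \le \limits^{\#} B_1$ via Theorem~\ref{Theorem-5-1-1} (the $A$-part and $B$-part blocks are precisely (\ref{5-8-a})/(\ref{5-8-b}) specialized to $N_{11}=N_{12}=N_{21}=0$, $N_{22}$ nilpotent), and confirm $A_2 = \widehat{U}\bigl[\begin{smallmatrix}0&0&0\\0&0&0\\0&0&N_{22}\end{smallmatrix}\bigr]\widehat{U}^\ast \mathop \le \limits^{-} \widehat{U}\bigl[\begin{smallmatrix}0&0&0\\0&0&0\\0&0&N_2\end{smallmatrix}\bigr]\widehat{U}^\ast = B_2$, which follows from the hypothesis $N_{22}\mathop \le \limits^{-} N_2$ and unitary invariance of the minus order.

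The main obstacle is the careful bookkeeping in the forward direction: one must be sure that the \emph{same} unitary $\widehat{U}$ produced by Theorem~\ref{Theorem-5-1-1} for the WG part simultaneously puts $A_2$ and $B_2$ into the claimed block shapes, and that the minus relation between these $3\times 3$ block nilpotents genuinely decouples as $N_{11}=N_{12}=N_{21}=0$ plus $N_{22}\mathop \le \limits^{-} N_2$. This hinges on the standard fact that if $M \mathop \le \limits^{-} \bigl[\begin{smallmatrix}0&0\\0&N\end{smallmatrix}\bigr]$ then $M$ has the block form $\bigl[\begin{smallmatrix}0&0\\0&M_{22}\end{smallmatrix}\bigr]$ with $M_{22}\mathop \le \limits^{-} N$, which I would justify using the range-inclusion characterization of the minus partial order (namely $\mathcal{R}(M)\subseteq\mathcal{R}(B_2)$, $\mathcal{R}(M^\ast)\subseteq\mathcal{R}(B_2^\ast)$, and $M B_2^- M = M$ for some, hence any suitable, $g$-inverse). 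Everything else is routine block multiplication already carried out in the lead-up to Theorem~\ref{Theorem-5-1-1}.
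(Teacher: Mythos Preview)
Your proposal is correct and follows the same overall architecture as the paper: reduce to Theorem~\ref{Theorem-5-1-1} for the WG part, identify $A_2$ and $B_2$ in the resulting $\widehat U$-basis, and then analyse the relation
\[
\begin{bmatrix} N_{11} & N_{12} \\ N_{21} & N_{22}\end{bmatrix}
\ \mathop\le\limits^{-}\
\begin{bmatrix} 0 & 0 \\ 0 & N_2\end{bmatrix}.
\]
The converse is handled identically in both.

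Where you diverge is in the mechanism for forcing $N_{11}=N_{12}=N_{21}=0$. The paper does this by invoking the canonical form of the minus order (simultaneous equivalence to $\mathrm{diag}(D_1,0,0)$ and $\mathrm{diag}(D_1,D_2,0)$) together with a sequence of rank identities ((\ref{2-1-1})--(\ref{2-1-1-0}) and the ensuing block computations). You instead use the well-known consequence of $M\mathop\le\limits^{-}N$ that $\mathcal R(M)\subseteq\mathcal R(N)$ and $\mathcal R(M^\ast)\subseteq\mathcal R(N^\ast)$: since the target matrix has its first block-row and first block-column identically zero, these inclusions immediately annihilate $N_{11},N_{12},N_{21}$, and the rank condition then restricts to $N_{22}\mathop\le\limits^{-}N_2$. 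Your route is shorter and avoids the normal-form detour; the paper's route is more computational but makes the rank bookkeeping fully explicit. Either argument is valid, and the side remark about $MB_2^{-}M=M$ is unnecessary---the two range inclusions already do all the work.
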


\begin{proof}

Let
   $A \mathop \le \limits^{{\rm CE}} B $£»
   $B \mathop \le \limits^{{\rm CE}} C $
and
   $A = A_1 + A_2 $
   and
   $B = B_1 + B_2 $
are the core-EP decompositions of  $A$ and  $B$, respectively.
Then
$A_1 \mathop \le \limits^ {\#} B_1 $
and
$A_2  \mathop \le \limits^ -   {B_2} $.
It follows
from  Theorem \ref{Theorem-5-1-1}
and
$A_2  \mathop \le \limits^ -  {B_2} $
that
    $\left[ {{\begin{matrix}
N_{11}  & N_{12}   \\
  N_{21}   & N_{22}   \\
\end{matrix} }} \right] \mathop \le \limits^ -  \left[ {{\begin{matrix}
0   & 0  \\
0   & N_{2}   \\
\end{matrix} }} \right]$.
Since
\begin{align}
\nonumber
{\rm rk}\left(N_{22}\right)
\leq
{\rm rk}\left(\left[ {{\begin{matrix}
N_{11}  & N_{12}   \\
  N_{21}   &N_{22}    \\
\end{matrix} }} \right]  \right)
\end{align}
and
\begin{align*}
{\rm rk}\left(N_{2}\right)- {\rm rk}\left(N_{22}\right)
&
\leq
{\rm rk}\left(N_{2}- N_{22}\right)
\leq
{\rm rk}\left(  \left[ {{\begin{matrix}
0   & 0  \\
0   & N_{2}   \\
\end{matrix} }} \right]
-
\left[ {{\begin{matrix}
N_{11}  & N_{12}   \\
  N_{21}   & N_{22}   \\
\end{matrix} }} \right]
\right)
\\
&
=
{\rm rk}\left(N_{2}\right)
-
{\rm rk}\left(\left[ {{\begin{matrix}
N_{11}  & N_{12}   \\
  N_{21}   &N_{22}    \\
\end{matrix} }} \right]  \right),
\end{align*}
we obtain
\begin{align}
\label{2-1-1}
{\rm rk}\left(N_{22}\right)
=
{\rm rk}\left(\left[ {{\begin{matrix}
N_{11}  & N_{12}   \\
  N_{21}   &N_{22}    \\
\end{matrix} }} \right]  \right), \ \
\
N_{22}  \mathop \le \limits^ -  N_{2},
\end{align}
and

\begin{align}
\label{2-1-2}
{\rm rk}\left(N_{2}\right)- {\rm rk}\left(N_{22}\right)
=
{\rm rk}\left(N_{2}\right)
-
{\rm rk}\left(\left[ {{\begin{matrix}
N_{11}  & N_{12}   \\
  N_{21}   &N_{22}    \\
\end{matrix} }} \right]  \right).
\end{align}

Since
$N_{22}  \mathop \le \limits^ -  N_{2}$,
there exist nonsingular
matrices $P$ and $Q$
such that
\begin{align}
\nonumber
 N_{22}=P\left[ {{\begin{matrix}
 D_1 & 0  & 0    \\
  0  & 0  & 0       \\
  0  & 0  & 0       \\
\end{matrix} }} \right]Q,
 \
\
N_2 = P\left[ {{\begin{matrix}
 D_1 & 0  & 0    \\
  0  & D_2  & 0       \\
  0  & 0  & 0       \\
\end{matrix} }} \right]Q,
\end{align}
where $D_1$ and $D_2$ are nonsingular,
(see \cite[Theorem 3.7.3]{Mitra2010book}).
It follows that
\begin{align}
\label{2-1-1-0}
{\rm rk}\left(N_{2}\right)
-
{\rm rk}\left(N_{22}\right)
=
{\rm rk}\left(D_2\right).\end{align}
Denote
\begin{align}\label{Bu-1}
N_{12} =\left[ {{\begin{matrix}
  M_{12}  & M_{13}  & M_{14}   \\
\end{matrix} }} \right]Q,
\ \mbox{\rm and }
\
N_{21} =P\left[ {{\begin{matrix}
  M_{21}  \\ M_{31}  \\ M_{41}   \\
\end{matrix} }} \right].
\end{align}
Then
\begin{align*}
\left[ {{\begin{matrix}
N_{11}     & N_{12}   \\
  N_{21}   &N_{22}    \\
\end{matrix} }} \right]
=
\left[ {{\begin{matrix}
I_{{\rm rk}\left(N_{11}\right)}    & 0  \\
 0                                 &P    \\
\end{matrix} }} \right]
\left[ {{\begin{matrix}
N_{11}  & M_{12}  & M_{13}  & M_{14}   \\
 M_{21}  &D_1 & 0  & 0    \\
 M_{31}  & 0  & 0  & 0       \\
 M_{41}  & 0  & 0  & 0       \\
\end{matrix} }} \right]
\left[ {{\begin{matrix}
I_{{\rm rk}\left(N_{11}\right)}    & 0  \\
 0                                 & Q     \\
\end{matrix} }} \right].
\end{align*}
It follows from (\ref{2-1-1})
that
\begin{align}
\label{Bu-2}
 M_{13}=0,
M_{14}=0,
M_{31}=0
\
\mbox{\rm and \ }M_{41}=0.
\end{align}
Therefore,
\begin{align*}
\left[ {{\begin{matrix}
-N_{11}     & -N_{12}   \\
 - N_{21}   &N_2-N_{22}    \\
\end{matrix} }} \right]
=
\left[ {{\begin{matrix}
I_{{\rm rk}\left(N_{11}\right)}    & 0  \\
 0                                 &P    \\
\end{matrix} }} \right]
\left[ {{\begin{matrix}
-N_{11}  & -M_{12}  &0  & 0   \\
 -M_{21}  &0 & 0  & 0    \\
 0  & 0  & D_2  & 0       \\
 0  & 0  & 0  & 0       \\
\end{matrix} }} \right]
\left[ {{\begin{matrix}
I_{{\rm rk}\left(N_{11}\right)}    & 0  \\
 0                                 & Q     \\
\end{matrix} }} \right].
\end{align*}
By applying
(\ref{2-1-2}),
(\ref{2-1-1-0})
and
$\left[ {{\begin{matrix}
N_{11}  & N_{12}   \\
  N_{21}   & N_{22}   \\
\end{matrix} }} \right] \mathop \le \limits^ -  \left[ {{\begin{matrix}
0   & 0  \\
0   & N_{2}   \\
\end{matrix} }} \right]$ ,
we derive that
\begin{align*}
{\rm rk}\left(\left[ {{\begin{matrix}
0    & 0   \\
0    &N_2     \\
\end{matrix} }} \right]-\left[ {{\begin{matrix}
 N_{11}     &  N_{12}   \\
 N_{21}   & N_{22}    \\
\end{matrix} }} \right]\right)
=&
 {\rm rk}\left(
\left[ {{\begin{matrix}
N_{11}  & M_{12}    \\
M_{21}  &0     \\
\end{matrix} }} \right]\right)
+
 {\rm rk}\left(D_2\right)
 \\
 =&
{\rm rk}\left(N_{2}\right)
-
{\rm rk}\left(N_{22}\right)
\\
=
 &
   {\rm rk}\left(D_2\right).
\end{align*}
Therefore,
$N_{11}=0$,
$M_{12}=0$ and $M_{21}=0$.
By applying (\ref{Bu-1})
and
(\ref{Bu-2}),
we obtain
$N_{11}=0$,
$N_{12}=0$ and $N_{21}=0$.

\bigskip

   Let $A$ and $B$
   be of the forms as given in
   (\ref{5-9-a}) and (\ref{5-9-b}),
then
$A = A_1 + A_2 $
   and
   $B = B_1 + B_2 $
are the core-EP decompositions of  $A$ and  $B$, respectively,
and
\begin{align*}
A_1
&
=
\widehat{U}
\left[ {{\begin{matrix}
T   &    \widehat{S}_1   &  \widehat{S}_2   \\
0   & 0  & 0   \\
0   & 0  & 0   \\
\end{matrix} }} \right]\widehat{U}^\ast, \
A_2=\widehat{U}
\left[ {{\begin{matrix}
0   &   0 &  0  \\
0   & 0  & 0   \\
0   & 0  & N_{22}   \\
\end{matrix} }} \right]\widehat{U}^\ast ;
\\
B_1
&
=
\widehat{U}
\left[ {{\begin{matrix}
T   &    \widehat{S}_1 -T^{-1}\widehat{S}_1 T_1  &  \widehat{S}_2 - T^{-1}\widehat{S}_1 S_1 \\
0   &  T_1  & S_1  \\
0   & 0     & 0  \\
\end{matrix} }} \right]\widehat{U}^\ast,\
B_2
=
\widehat{U}
\left[ {{\begin{matrix}
0   &    0  &  0 \\
0   &  0  & 0  \\
0   & 0  & N_2   \\
\end{matrix} }} \right]\widehat{U}^\ast.
\end{align*}
It is easy to check that $A_1 \mathop \le \limits^ {\#} B_1 $
and
$A_2  \mathop \le \limits^ -   {B_2} $.
Therefore,
$A \mathop \le \limits^{{\rm  CE}} B $.
\end{proof}

\begin{remark}
\label{Remark-3-4}
In Ex. \ref{Ex.5-3},
it is easy to check that
$ A \mathop \le \limits^ {\#,-} B$.
Since
$A_1 \mathop \nleq \limits^ { \# } B_1$,
we have
$A\mathop \nleq \limits^{{\rm CE}}B$.
Therefore,
the C-N partial order
does not imply
the C-E partial order.
\end{remark}

\begin{corollary}
\label{Theorem-5-3}
Let  $A,B \in {\mathbb{C} }_{n,n}$.
If
$A \mathop \le \limits^{{\rm CE}} B $,
then
$A \mathop \le \limits^ -  B$.
\end{corollary}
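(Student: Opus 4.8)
The plan is to use the explicit canonical forms for $A$ and $B$ provided by Theorem~\ref{Theorem-5-1} and then verify the minus-order condition $A\mathop\le\limits^{-}B$ directly, i.e.\ produce a single $g$-inverse $G$ of $B$ with $GA=GB$ and $AG=BG$ (equivalently, check the rank-additivity criterion ${\rm rk}(B-A)={\rm rk}(B)-{\rm rk}(A)$, see \cite[Theorem 3.7.3]{Mitra2010book}). Since we already have the proof of Theorem~\ref{Theorem-5-1} at hand, the structural information $N_{11}=N_{12}=N_{21}=0$ has been extracted, so $A$ and $B$ are simultaneously of the block-triangular shapes in (\ref{5-9-a}) and (\ref{5-9-b}) with $N_{22}\mathop\le\limits^{-}N_2$.

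First I would reduce to the canonical form: by Theorem~\ref{Theorem-5-1}, $A\mathop\le\limits^{{\rm CE}}B$ gives a unitary $\widehat U$ with $A=\widehat U\,\widetilde A\,\widehat U^\ast$ and $B=\widehat U\,\widetilde B\,\widehat U^\ast$, where $\widetilde A$ and $\widetilde B$ are the $3\times3$ block matrices appearing there. Because $A\mathop\le\limits^{-}B$ is invariant under the unitary similarity $X\mapsto\widehat U^\ast X\widehat U$ (a unitary similarity carries $g$-inverses to $g$-inverses and preserves all four of $AXA=A$ etc.), it suffices to show $\widetilde A\mathop\le\limits^{-}\widetilde B$. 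Second, I would compute $\widetilde B-\widetilde A$; the first block-row difference is $\bigl[\,0\ \ {-}T^{-1}\widehat S_1T_1\ \ {-}T^{-1}\widehat S_1S_1\,\bigr]$, the middle block-row of the difference is $\bigl[\,0\ \ T_1\ \ S_1\,\bigr]$, and the last block-row of the difference is $\bigl[\,0\ \ 0\ \ N_2-N_{22}\,\bigr]$. Third, I would compute the three ranks. Using that $T$ is invertible one gets ${\rm rk}(\widetilde A)={\rm rk}(T)+{\rm rk}(N_{22})$ and ${\rm rk}(\widetilde B)={\rm rk}(T)+{\rm rk}\bigl(\bigl[\begin{smallmatrix}T_1&S_1\\0&N_2\end{smallmatrix}\bigr]\bigr)={\rm rk}(T)+{\rm rk}(T_1)+{\rm rk}(N_2)$ (the last equality because $T_1$ is invertible), while the first column-block of $\widetilde B-\widetilde A$ is zero and the remaining part is block upper triangular with invertible pivot $T_1$, so ${\rm rk}(\widetilde B-\widetilde A)={\rm rk}(T_1)+{\rm rk}(N_2-N_{22})$. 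Combining these with the hypothesis $N_{22}\mathop\le\limits^{-}N_2$, which by \cite[Theorem 3.7.3]{Mitra2010book} means ${\rm rk}(N_2-N_{22})={\rm rk}(N_2)-{\rm rk}(N_{22})$, yields ${\rm rk}(\widetilde B-\widetilde A)={\rm rk}(T_1)+{\rm rk}(N_2)-{\rm rk}(N_{22})={\rm rk}(\widetilde B)-{\rm rk}(\widetilde A)$, which is exactly $\widetilde A\mathop\le\limits^{-}\widetilde B$.

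The main obstacle I anticipate is the rank bookkeeping across the off-diagonal blocks: one must be careful that the $(1,2)$ and $(1,3)$ entries of $\widetilde A$ and $\widetilde B$ (namely $\widehat S_1,\widehat S_2$ versus $\widehat S_1-T^{-1}\widehat S_1T_1$, $\widehat S_2-T^{-1}\widehat S_1S_1$) do not inflate the ranks — this is where invertibility of $T$ is used, since left-multiplying the first block-row by $T^{-1}$ and performing block column operations clears those entries without changing rank. An alternative, perhaps cleaner, route is to invoke Corollary~\ref{Remark-3-1}-style rank identities together with the fact that the core-EP part contributes ${\rm rk}(A^k)$-type ranks and then reduce to the nilpotent parts; but the direct canonical-form computation above is the most transparent and self-contained. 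Either way the proof is short once Theorem~\ref{Theorem-5-1} is in place.
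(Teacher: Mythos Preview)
Your proposal is correct and follows essentially the same route as the paper's own proof: invoke the canonical forms of Theorem~\ref{Theorem-5-1}, compute ${\rm rk}(A)$, ${\rm rk}(B)$, and ${\rm rk}(B-A)$ by clearing the off-diagonal blocks via the invertibility of $T$ and $T_1$, and then use $N_{22}\mathop\le\limits^{-}N_2$ to obtain ${\rm rk}(N_2-N_{22})={\rm rk}(N_2)-{\rm rk}(N_{22})$, whence the rank-additivity criterion ${\rm rk}(B-A)={\rm rk}(B)-{\rm rk}(A)$. The one place where your writeup is slightly loose is the phrase ``block upper triangular with invertible pivot $T_1$'' for the nonzero columns of $\widetilde B-\widetilde A$ (the first block-row there is $-T^{-1}\widehat S_1$ times the second, not zero); the paper handles this by an explicit left row-operation, and you yourself flag this in your obstacle paragraph, so the argument goes through.
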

\begin{proof}
Let  $A,B \in {\mathbb{C} }_{n,n}$.
Then
$A$ and $B$
have the forms as given in Theorem \ref{Theorem-5-1}.
Since $T$ and  $T_1$ are invertible,
it follows
that
\begin{align*}
{\rm rk}(B )
&
=
{\rm rk}\left(  T \right)
+
{\rm rk}\left(  T_1  \right)
+
{\rm rk}\left(  N_2  \right);
\\
{\rm rk}(A )
&
=
{\rm rk}\left(  T_1 \right)
+
{\rm rk}\left(  N_{22}  \right);
\\
{\rm rk}(B-A)
&
=
{\rm rk}\left(\left[ {{\begin{matrix}
0   &  -T^{-1}\widehat{S}_1 T_1  &   - T^{-1}\widehat{S}_1 S_1 \\
0   &  T_1  & S_1  \\
0   & 0  & N_2 - N_{22}   \\
\end{matrix} }} \right]\right)
\\
&
=
{\rm rk}\left(\left[ {{\begin{matrix}
I_{{\rm rk}\left(  T \right)}  &  T^{-1}\widehat{S}_1  &  0 \\
0   &  I_{{\rm rk}\left(  T_1 \right)}  & 0  \\
0   & 0  & I_{n-{\rm rk}\left(  T \right)-{\rm rk}\left(  T_1 \right)}  \\
\end{matrix} }} \right]
\left[ {{\begin{matrix}
0   &  -T^{-1}\widehat{S}_1 T_1  &   - T^{-1}\widehat{S}_1 S_1 \\
0   &  T_1  & S_1  \\
0   & 0  & N_2 - N_{22}   \\
\end{matrix} }} \right]\right)
\\
&
=
{\rm rk}\left(\left[ {{\begin{matrix}
  T_1  & S_1  \\
  0 & N_2 - N_{22}   \\
\end{matrix} }} \right]\right)
=
{\rm rk}\left(\left[ {{\begin{matrix}
  T_1  & 0  \\
  0    & N_2 - N_{22}   \\
\end{matrix} }} \right]\right)
\\
&
=
{\rm rk}\left(   T_1 \right)
+
{\rm rk}\left(  N_2 - N_{22}  \right)
\\
&
=
{\rm rk}\left(   T_1 \right)
+
{\rm rk}\left(  N_2 \right)
-
{\rm rk}\left(   N_{22}  \right).
\end{align*}
Therefore,
$ {\rm rk}(B -A)={\rm rk}(B )-{\rm rk}(A )$,
that is,
$A \mathop \le \limits^ -  B$.
\end{proof}

\section{ Characterizations of the core-EP order  }
\label{Sec-App-1}
 As is noted in \cite{Wang2016laa-A},
  the core-EP order is given:
\begin{align}
\label{4-4}
A \mathop \le \limits^{\tiny\textcircled{\dag}} B
:
A,B \in {\mathbb{C} }_{n,n},
 A^{\tiny\textcircled{\dag}} A = A^{\tiny\textcircled{\dag}} B
 {\ \rm and\ }
AA^{\tiny\textcircled{\dag}}=BA^{\tiny\textcircled{\dag}}.
\end{align}
 Some characterizations of the core-EP order are given
 in \cite{Wang2016laa-A}.
 \begin{lemma}{\rm \cite{Wang2016laa-A}}
\label{Lemma-1}
Let $A, B\in {\mathbb{C} }_{n,n}$
and
$A \mathop \le \limits^{\tiny\textcircled{\dag}} B$.
Then there exists a  unitary matrix $U$ such that
\begin{align}
\label{4-2}
A = U\left[ {{\begin{matrix}
 {T_1 }   & {T_2 }   & {S_1 }   \\
 0   & {N_{11} }   & {N_{12} }   \\
 0   & {N_{21} }   & {N_{22} }   \\
\end{matrix} }} \right]U^\ast,
\
\
B = U\left[ {{\begin{matrix}
 {T_1 }   & {T_2 }   & {S_1 }   \\
 0   & {T_3 }   & {S_2 }   \\
 0   & 0   & {N_2 }   \\
\end{matrix} }} \right]U^\ast ,
 \end{align}
where
$\left[ {{\begin{matrix}
 {N_{11} }   & {N_{12} }   \\
 {N_{21} }   & {N_{22} }   \\
\end{matrix} }} \right]$ and $N_2 $ are nilpotent,  $T_1$ and $T_3 $ are  non-singular .
\end{lemma}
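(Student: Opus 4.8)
The plan is a two-stage unitary block reduction: first I would use the core-EP decomposition of $A$ to pin down a ``core'' block that must be shared by $A$ and $B$, and then I would refine the complementary block by applying the core-EP decomposition a second time, now to a single block coming from $B$.

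\emph{Step 1: reduce $A$ and locate the common block.} By Lemma~\ref{Theorem-2-2} I may pick a unitary $U_0$ with $A = U_0\left[\begin{smallmatrix} T & S \\ 0 & N\end{smallmatrix}\right]U_0^\ast$, where $T$ is non-singular of size $r={\rm rk}(A^k)$ and $N$ is nilpotent, and then $A^{\tiny\textcircled{\dag}} = U_0\left[\begin{smallmatrix} T^{-1} & 0 \\ 0 & 0\end{smallmatrix}\right]U_0^\ast$. Partition $U_0^\ast B U_0 = \left[\begin{smallmatrix} B_{11} & B_{12} \\ B_{21} & B_{22}\end{smallmatrix}\right]$ conformally. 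Since $AA^{\tiny\textcircled{\dag}} = U_0\left[\begin{smallmatrix} I & 0 \\ 0 & 0\end{smallmatrix}\right]U_0^\ast$ while $BA^{\tiny\textcircled{\dag}} = U_0\left[\begin{smallmatrix} B_{11}T^{-1} & 0 \\ B_{21}T^{-1} & 0\end{smallmatrix}\right]U_0^\ast$, the hypothesis $AA^{\tiny\textcircled{\dag}} = BA^{\tiny\textcircled{\dag}}$ forces $B_{11}=T$ and $B_{21}=0$; similarly $A^{\tiny\textcircled{\dag}}A = U_0\left[\begin{smallmatrix} I & T^{-1}S \\ 0 & 0\end{smallmatrix}\right]U_0^\ast$ and $A^{\tiny\textcircled{\dag}}B = U_0\left[\begin{smallmatrix} I & T^{-1}B_{12} \\ 0 & 0\end{smallmatrix}\right]U_0^\ast$, so $A^{\tiny\textcircled{\dag}}A = A^{\tiny\textcircled{\dag}}B$ forces $B_{12}=S$. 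Thus $U_0^\ast B U_0 = \left[\begin{smallmatrix} T & S \\ 0 & B_{22}\end{smallmatrix}\right]$, and the only remaining freedom is in the square block $B_{22}$.

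\emph{Step 2: refine the complementary block.} Next I would apply Lemma~\ref{Theorem-2-2} to $B_{22}$, choosing a unitary $V$ with $V^\ast B_{22} V = \left[\begin{smallmatrix} T_3 & S_2 \\ 0 & N_2\end{smallmatrix}\right]$, $T_3$ non-singular and $N_2$ nilpotent. Setting $U = U_0\,{\rm diag}(I_r,V)$ (again unitary) and splitting $SV = \left[\begin{smallmatrix} T_2 & S_1\end{smallmatrix}\right]$ and $V^\ast N V = \left[\begin{smallmatrix} N_{11} & N_{12} \\ N_{21} & N_{22}\end{smallmatrix}\right]$ conformally with the $T_3/N_2$ splitting, a direct computation yields $U^\ast A U = \left[\begin{smallmatrix} T & T_2 & S_1 \\ 0 & N_{11} & N_{12} \\ 0 & N_{21} & N_{22}\end{smallmatrix}\right]$ and $U^\ast B U = \left[\begin{smallmatrix} T & T_2 & S_1 \\ 0 & T_3 & S_2 \\ 0 & 0 & N_2\end{smallmatrix}\right]$, which are the desired forms (\ref{4-2}) with $T_1 := T$. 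The block $\left[\begin{smallmatrix} N_{11} & N_{12} \\ N_{21} & N_{22}\end{smallmatrix}\right] = V^\ast N V$ is unitarily similar to $N$, hence nilpotent, and $N_2$ is nilpotent by construction, so all the asserted properties hold.

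I do not expect a real obstacle: both steps are routine block algebra once the explicit formula $A^{\tiny\textcircled{\dag}} = U_0\left[\begin{smallmatrix} T^{-1} & 0 \\ 0 & 0\end{smallmatrix}\right]U_0^\ast$ is in hand. The one point that needs care is the bookkeeping of block sizes in Step 2: the single core-EP decomposition of $B_{22}$ must be used simultaneously to split $SV$ and $V^\ast N V$, so that the $(1,1)$ blocks of $A$ and $B$ stay equal to $T$ and the lower-right $2\times2$ block of $U^\ast A U$ is precisely the nilpotent conjugate $V^\ast N V$; the degenerate cases (for instance $r=n$, or an empty $T_3$-block) cause no trouble.
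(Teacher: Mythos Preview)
Your argument is correct. Note, however, that in the present paper Lemma~\ref{Lemma-1} is merely \emph{quoted} from \cite{Wang2016laa-A} and no proof is supplied here, so there is nothing in this paper to compare your proposal against directly; the two-stage unitary reduction you carry out (core-EP decomposition of $A$, then of the residual block $B_{22}$) is precisely the argument used in the cited source, and your block computations in Step~1 match the analogous ones the authors perform just after the lemma (there with $A^{\tiny\textcircled{W}}$ in place of $A^{\tiny\textcircled{\dag}}$, en route to Theorem~\ref{Theorem-4-1}).
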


 Let the core-EP decomposition of $A$ be as given in (\ref{2-4}),
 and denote
 \begin{align}
\label{4-2}
U^\ast B U&
=
\left[ {{\begin{matrix}
 B_1   & B_2  \\
 B_3   & B_4   \\
\end{matrix} }} \right].
 \end{align}
By applying  (\ref{3-13})  and
\begin{align}
\nonumber
B A^{\tiny\textcircled{W}}
&
=
U\left[ {{\begin{matrix}
 B_1   & B_2  \\
 B_3   & B_4   \\
\end{matrix} }} \right]\left[ {{\begin{matrix}
 T^{-1}    & T^{-2}S  \\
 0   & 0   \\
\end{matrix} }} \right]U^\ast
 =
 U\left[ {{\begin{matrix}
 B_1T^{-1}    &B_1 T^{-2}S  \\
 B_3T^{-1}    & B_3T^{-2}S  \\
\end{matrix} }} \right]U^\ast,
\end{align}
we have
$A A^{\tiny\textcircled{W}}=B A^{\tiny\textcircled{W}}$
if and only if
\begin{align}
\nonumber
B_1=T \mbox{\ and \ } B_3=0.
 \end{align}
It follows that
\begin{align}
\nonumber
A^\ast A^{\tiny\textcircled{W}}
 &
=
U\left[ {{\begin{matrix}
 T^\ast    &0  \\
 S^\ast & N ^\ast  \\
\end{matrix} }} \right]
\left[ {{\begin{matrix}
 T^{-1}    & T^{-2}S  \\
 0   & 0   \\
\end{matrix} }} \right]U^\ast
 =
 U\left[ {{\begin{matrix}
 T^\ast T^{-1}    & T^\ast T^{-2}S  \\
 S^\ast T^{-1}    & S^\ast T^{-2}S  \\
\end{matrix} }} \right]U^\ast,
\\
\nonumber
B^\ast A^{\tiny\textcircled{W}}
&
=
U\left[ {{\begin{matrix}
 T ^\ast  & 0 \\
 B_2^\ast & B_4 ^\ast  \\
\end{matrix} }} \right]\left[ {{\begin{matrix}
 T^{-1}    & T^{-2}S  \\
 0   & 0   \\
\end{matrix} }} \right]U^\ast
 =
 U\left[ {{\begin{matrix}
   T ^\ast T ^{-1}   &   T ^\ast T^{-2}S  \\
  B_2^\ast  T^{-1}   & B_2^\ast T^{-2}S\\
\end{matrix} }} \right]U^\ast .
 \end{align}
 Therefore,
$A A^{\tiny\textcircled{W}}=B A^{\tiny\textcircled{W}}$
and
 $A^\ast A^{\tiny\textcircled{W}} = B^\ast A^{\tiny\textcircled{W}}$
 if and only if
\begin{align}
\label{4-3}
B_1=T ,
B_3=0,
B_2= S,
\mbox{   and\ }
B_4 \mbox{   is arbitrary},
 \end{align}
 that is,
$A$ and  $B$ satisfy
$A A^{\tiny\textcircled{W}}=B A^{\tiny\textcircled{W}}$
and
 $A^\ast A^{\tiny\textcircled{W}} = B^\ast A^{\tiny\textcircled{W}}$
if and only if
there exists a  unitary matrix $ {U}$ such that
\begin{align}
\label{4-2}
A = { U}\left[ {{\begin{matrix}
 T  & S  \\
 0   & N   \\
\end{matrix} }} \right] {U}^\ast  ,
\ \
B =  { U}\left[ {{\begin{matrix}
 T  & S  \\
 0   & B_4  \\
\end{matrix} }} \right] {U}^\ast  ,
 \end{align}
where
 $N $ is nilpotent,
 $T$ is  non-singular
 and
 $B_4$  is arbitrary. 
Therefore, by applying Lemma \ref{Lemma-1},
 we derive
one  characterization  of the core-EP order.
\begin{theorem}
\label{Theorem-4-1}
Let $A,B \in \mathbb{C}_{n,n}$.
Then
$A \mathop \le \limits^{\tiny\textcircled{\dag}} B$
if
and only if
$$
AA^{\tiny\textcircled{W}}=BA^{\tiny\textcircled{W}}
{\ \rm and\ }
A^\ast A^{\tiny\textcircled{W}} = B^\ast A^{\tiny\textcircled{W}}.$$
\end{theorem}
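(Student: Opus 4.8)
The plan is to prove the equivalence by unwinding the core-EP decomposition of $A$ on both sides. First I would fix the core-EP decomposition $A = U\begin{bmatrix} T & S \\ 0 & N\end{bmatrix}U^\ast$ as in (\ref{2-4}), so that by Theorem \ref{Remain-3-2} we have the explicit form $A^{\tiny\textcircled{W}} = U\begin{bmatrix} T^{-1} & T^{-2}S \\ 0 & 0\end{bmatrix}U^\ast$, and write the unknown matrix $B$ in the block form $U^\ast B U = \begin{bmatrix} B_1 & B_2 \\ B_3 & B_4\end{bmatrix}$ with respect to the same unitary $U$ (this is display (\ref{4-2})). Then I would compute $AA^{\tiny\textcircled{W}}$, $BA^{\tiny\textcircled{W}}$, $A^\ast A^{\tiny\textcircled{W}}$ and $B^\ast A^{\tiny\textcircled{W}}$ as block products; these are exactly the computations already carried out in the paragraphs preceding the theorem, giving $AA^{\tiny\textcircled{W}} = U\begin{bmatrix} I & T^{-1}S \\ 0 & 0\end{bmatrix}U^\ast$, and matching the $(1,1)$ and $(2,1)$ blocks of $BA^{\tiny\textcircled{W}}$ forces $B_1 = T$, $B_3 = 0$; matching the blocks of $B^\ast A^{\tiny\textcircled{W}}$ against $A^\ast A^{\tiny\textcircled{W}}$ then additionally forces $B_2 = S$ while $B_4$ remains free. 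So the pair of equations $AA^{\tiny\textcircled{W}} = BA^{\tiny\textcircled{W}}$ and $A^\ast A^{\tiny\textcircled{W}} = B^\ast A^{\tiny\textcircled{W}}$ holds if and only if there is a unitary $U$ with $A = U\begin{bmatrix} T & S \\ 0 & N\end{bmatrix}U^\ast$ and $B = U\begin{bmatrix} T & S \\ 0 & B_4\end{bmatrix}U^\ast$, $T$ nonsingular, $N$ nilpotent, $B_4$ arbitrary.

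Next I would connect this normal form to the core-EP order. For the direction ``$A \mathop\le\limits^{\tiny\textcircled{\dag}} B \Rightarrow$ the two equations'', I would invoke Lemma \ref{Lemma-1}: $A \mathop\le\limits^{\tiny\textcircled{\dag}} B$ gives a unitary $U$ putting $A$ and $B$ simultaneously in the $3\times 3$ block forms (\ref{4-2}) of that lemma, with $A$ having top-left block $T_1$, middle/last diagonal blocks forming a nilpotent, and $B$ agreeing with $A$ in the first block row and having $T_3$ nonsingular, $N_2$ nilpotent below. Regrouping the last two block rows/columns into a single block, this is precisely the shape $A = U\begin{bmatrix} T & S \\ 0 & N\end{bmatrix}U^\ast$, $B = U\begin{bmatrix} T & S \\ 0 & B_4\end{bmatrix}U^\ast$ with $T = T_1$, $S = \begin{bmatrix} T_2 & S_1\end{bmatrix}$, $N = \begin{bmatrix} N_{11} & N_{12} \\ N_{21} & N_{22}\end{bmatrix}$ nilpotent (so this $N$ is a valid core-EP nilpotent part for $A$), and $B_4 = \begin{bmatrix} T_3 & S_2 \\ 0 & N_2\end{bmatrix}$. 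Hence the two matrix equations follow from the computation in the first paragraph. Conversely, given a unitary $U$ and blocks as in (\ref{4-2}) with $A = U\begin{bmatrix} T & S \\ 0 & N\end{bmatrix}U^\ast$, $B = U\begin{bmatrix} T & S \\ 0 & B_4\end{bmatrix}U^\ast$, I need to check $A \mathop\le\limits^{\tiny\textcircled{\dag}} B$; for this I would compute $A^{\tiny\textcircled{\dag}} = U\begin{bmatrix} T^{-1} & 0 \\ 0 & 0\end{bmatrix}U^\ast$ from (\ref{2-5}), then $A^{\tiny\textcircled{\dag}}A = U\begin{bmatrix} I & T^{-1}S \\ 0 & 0\end{bmatrix}U^\ast = A^{\tiny\textcircled{\dag}}B$ and $AA^{\tiny\textcircled{\dag}} = U\begin{bmatrix} I & 0 \\ 0 & 0\end{bmatrix}U^\ast = BA^{\tiny\textcircled{\dag}}$, which is exactly the definition (\ref{4-4}) of the core-EP order.

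The step I expect to need the most care is the bookkeeping that passes between the $2\times 2$ block description coming from the core-EP decomposition of $A$ and the $3\times 3$ block description of Lemma \ref{Lemma-1}: I must make sure that the block $N$ obtained by amalgamating $\begin{bmatrix} N_{11} & N_{12} \\ N_{21} & N_{22}\end{bmatrix}$ is genuinely nilpotent (which follows because that $2\times 2$ block is the nilpotent part in Lemma \ref{Lemma-1}), and that the resulting decomposition of $A$ really is \emph{the} core-EP decomposition, so that $A^{\tiny\textcircled{W}} = A_1^\#$ with the claimed block form. Once that identification is clean, everything else is the routine block-matrix algebra already displayed in the excerpt, and the proof reduces to quoting Theorem \ref{Remain-3-2}, Lemma \ref{Lemma-1}, and the preceding computation. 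I would therefore write the proof as: (i) recall the block forms and the formula for $A^{\tiny\textcircled{W}}$; (ii) show the two equations are equivalent to $B$ having the block form $U\begin{bmatrix} T & S \\ 0 & B_4\end{bmatrix}U^\ast$; (iii) match this with Lemma \ref{Lemma-1} to get ``$\Leftarrow$'', and verify (\ref{4-4}) directly to get the reverse implication.
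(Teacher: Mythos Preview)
Your proposal is correct and follows essentially the same route as the paper: fix the core-EP block form of $A$, write $B$ in the same unitary basis, and show by the block computations already displayed that the two WG-inverse equations are equivalent to $B$ having the form $U\begin{bmatrix} T & S \\ 0 & B_4\end{bmatrix}U^\ast$, then match this with the core-EP order via Lemma~\ref{Lemma-1}. The only minor difference is that you verify the implication ``block form $\Rightarrow A \mathop\le\limits^{\tiny\textcircled{\dag}} B$'' by checking definition~(\ref{4-4}) directly, whereas the paper simply invokes Lemma~\ref{Lemma-1} for both directions; your explicit check is a harmless (and arguably cleaner) elaboration, not a different argument.
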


\section*{ Acknowledgements}.
The first author was supported partially by
the National  Natural Science Foundation of China [grant number 11401243]
and
China Postdoctoral Science Foundation [grant number 2015M581690].
The second author was supported partially by
the National Natural Science Foundation of China  [grant number 11371089]
and
the Natural Science Foundation of Jiangsu Province  [grant number  BK20141327].

\section*{References}

\end{document}